\newcommand{\arginf}{\mathop{\rm arg~inf}\limits}
\newtheorem{thm}{Theorem}
\newtheorem{prop}[thm]{Proposition}
\newtheorem{corollary}[thm]{Corollary}
\newtheorem{assumption}{Assumption}
\theoremstyle{definition}
\theoremstyle{remark}
\subjclass[2010]{35R30, 58J50, 65F10, 65N12}
\title{OCEAN WAVE SPECTRUM RECONSTRUCTION FROM HF RADAR DATA AND ITS APPLICATION TO WAVE HEIGHT ESTIMATION}
\author{Kaede Watanabe}
\address{Department of Mathematics, Graduate School of Science, Tohoku University,
6-3 Aoba, Aramaki-aza, Aoba-ku, Sendai, Miyagi 980-8578, Japan. \& Advanced Institute for Materials Research (AIMR), Tohoku University, 2-1-1 Katahira, Aoba-ku, Sendai, Miyagi 980-8577, Japan.}
\email{kaede.watanabe.t3@dc.tohoku.ac.jp}
\author{Toshiaki Yachimura}
\address{Mathematical Science Center for Co-creative Society,
Tohoku University, 468-1 Aoba, Aramaki, Aoba-ku, Sendai, Miyagi 980-0845, Japan .}
\email{toshiaki.yachimura.a4@tohoku.ac.jp}
\author{Tsubasa Terada}
\address{Information Technology R\&D Center, Mitsubishi Electric Corporation, 5-1-1 Ofuna, Kamakura, Kanagawa 247-8501, Japan.}
\email{Terada.Tsubasa@cb.mitsubishielectric.co.jp}
\author{Hiroshi Kameda}
\address{Information Technology R\&D Center, Mitsubishi Electric Corporation, 5-1-1 Ofuna, Kamakura, Kanagawa 247-8501, Japan.}
\email{Kameda.Hiroshi@ab.mitsubishielectric.co.jp}
\author{Ryuhei Takahashi}
\address{Information Technology R\&D Center, Mitsubishi Electric Corporation, 5-1-1 Ofuna, Kamakura, Kanagawa 247-8501, Japan .}
\email{Takahashi.Ryuhei@ab.mitsubishielectric.co.jp}
\author{Hiroshi Suito}
\address{Advanced Institute for Materials Research (AIMR), Tohoku University, 2-1-1 Katahira, Aoba-ku, Sendai, Miyagi 980-8577, Japan. \& Mathematical Science Center for Co-creative Society,
Tohoku University, 468-1 Aoba, Aramaki, Aoba-ku, Sendai, Miyagi 980-0845, Japan .}
\email{hiroshi.suito@tohoku.ac.jp}
\subjclass[2010]{45Q05, 65J20, 65N12, 65R32, 86A05}
\keywords{Ocean wave height estimation, remote sensing, optimization problem, Tikhonov regularization, nonnegative sparse regularization, iterative method}
\begin{document}

\begin{abstract}
Real-time estimation of ocean wave heights using high-frequency (HF) radar has attracted great attention. This method offers the benefit of easy maintenance by virtue of its ground-based installation. However, it is adversely affected by issues such as low estimation accuracy. As described herein, we propose an algorithm based on the nonnegative sparse regularization method to estimate the energy distribution of the component waves, known as the ocean wave spectrum, from HF radar data. After proving a stability estimate of this algorithm, we perform numerical simulations to verify the proposed method's effectiveness.
\end{abstract}

\maketitle

\section{Introduction}\label{Introduction}
Accurate and rapid estimation of ocean wave heights is necessary to mitigate damage caused by high waves associated with typhoons or earthquakes. Additionally, such estimations are crucially important for industrial applications, including port construction, disaster prevention, and fisheries. Wave height estimation can be performed using devices such as buoys and using GPS-based methods \cite{Breunung2024, Joodaki2013}. However, in recent years, measurement techniques using high-frequency (HF) radar have attracted great attention. One reason is that HF radar is installed along the coast, with attendant ease of deployment and maintenance on land. Furthermore, wave conditions in coastal waters are often complex. The accuracy of GPS-based wave height measurements tends to be lower there \cite{Cavaleri2018}. Therefore, for such areas, HF radar is anticipated as a promising tool for wave height estimation.

HF radar is a system that transmits electromagnetic waves toward the sea and receives the reflected waves. The obtained electromagnetic waves are analyzed using spectral analysis, producing a so-called Doppler spectrum (Figure~\ref{fig:example}).

\begin{figure}[t]
    \centering
    \includegraphics[width=10.0cm]{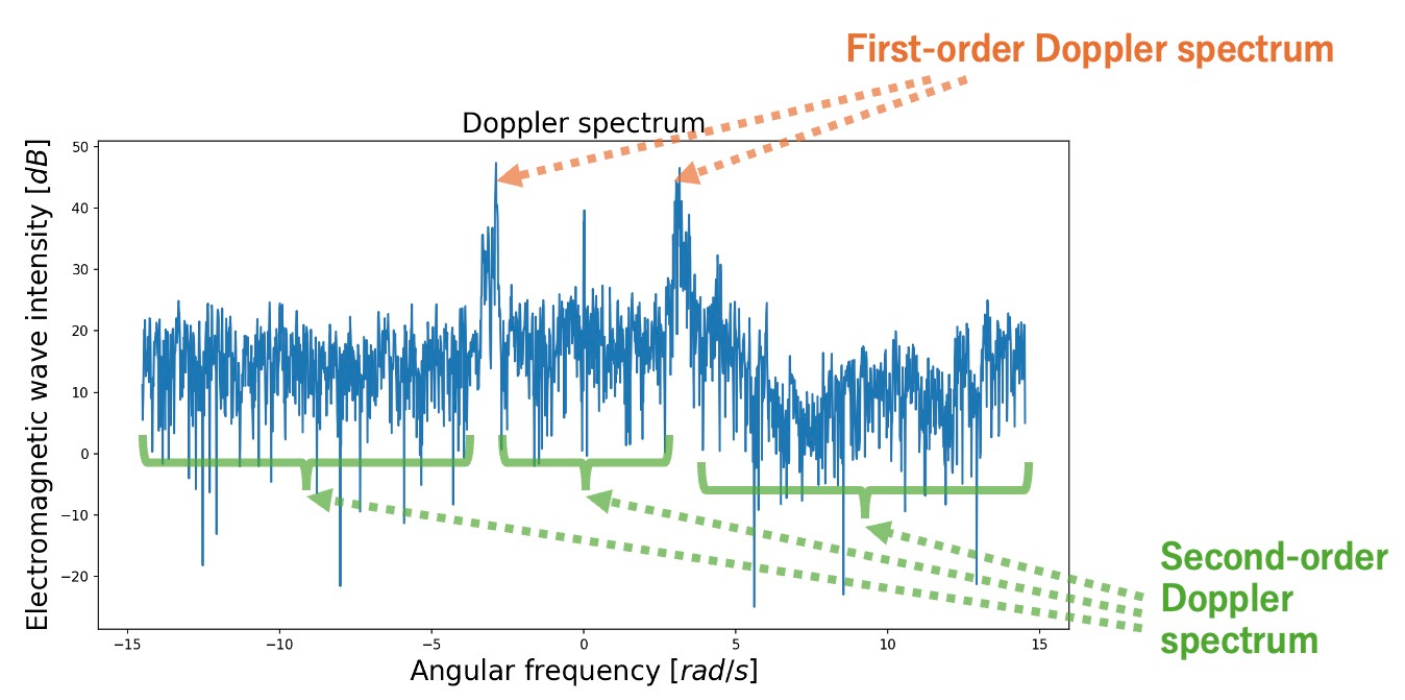} 
    \caption{Example of a Doppler spectrum observed using HF radar. The peaks appearing symmetrically on both sides correspond to the first-order Doppler spectrum, whereas the remaining parts correspond to the second-order Doppler spectrum.}
    \label{fig:example}
\end{figure}
Ocean waves are represented as a superposition of component waves with different directions and frequencies \cite{Massie2001}. The Doppler spectrum reflects the information of component waves. Two prominent peaks can be observed from it. This phenomenon occurs when electromagnetic waves with a wavenumber vector $ \bm{k}_0 = (k_0, 0) $ are scattered by component waves with wavenumber vectors $ \pm 2\bm{k}_0 $. This phenomenon is known as first-order scattering. The remaining parts are designated as the second-order Doppler spectrum. These occur when the electromagnetic wave is scattered by the component waves $ \bm{k}_1 $ and $ \bm{k}_2 $ satisfying $ \bm{k}_1 + \bm{k}_2 = -2\bm{k}_0 $ \cite{lipa1982}. Here, since $ \bm{k}_1 $ can take any wavenumber vector, we can infer that the second-order Doppler spectrum includes information about all component waves. The following relation is known to hold between the second-order Doppler spectrum and the ocean wave spectrum, which expresses the energy distribution of the component waves \cite{Johnstone1975} as
\begin{align}
\label{main1}
\sigma_2(\omega) = 2^6 \pi k_0^4 &\sum_{m_1 = \pm 1} \sum_{m_2 = \pm 1} \int_{-\infty}^{\infty} \int_{-\infty}^{\infty} |\Gamma(m_1, m_2, \omega, \bm{k}_1, \bm{k}_2)|^2 \nonumber \\
& S(m_1\bm{k}_1)S(m_2\bm{k}_2) \delta(\omega - m_1\sqrt{g k_1} - m_2\sqrt{g k_2}) \, dp \, dq, 
\end{align}
where $\sigma_2(\omega)$ represents the second-order Doppler spectrum, and where $ S(p, q) $ denotes the ocean wave spectrum. The wavenumber vectors are described as $ \bm{k}_1 = (p - k_0, q) $ and $ \bm{k}_2 = (-p - k_0, -q) $, satisfying $ \bm{k}_1 + \bm{k}_2 = -2\bm{k}_0 $. Here, the lengths of $ \bm{k}_1 $ and $ \bm{k}_2 $ are represented respectively as $ k_1 = |\bm{k}_1| $ and $ k_2 = |\bm{k}_2| $, and $ g $ stands for gravitational acceleration. The term $ \Gamma(m_1, m_2, \omega, \bm{k}_1, \bm{k}_2) $, designated as the coupling coefficient, is defined as $ \Gamma = \Gamma_E - i \Gamma_H $, where
\[
\Gamma_E = \frac{1}{2}\left\{\frac{\frac{(\bm{k}_1 \cdot \bm{k}_0)(\bm{k}_2 \cdot \bm{k}_0)}{k_0^2} - 2\bm{k}_1 \cdot \bm{k}_2}{(\bm{k}_1 \cdot \bm{k}_2)^{1/2} - k_0 \Delta}\right\},
\]
\[
\Gamma_H = \frac{1}{2} \left\{ k_1 + k_2 + \frac{k_1 k_2 - \bm{k}_1 \cdot \bm{k}_2}{m_1 m_2 \left( k_1 k_2 \right)^{\frac{1}{2}}}\cdot \frac{2gk_0 + \omega^2}{2gk_0 - \omega^2} \right\}.
\]
 Here, $ \Delta $ represents the ocean impedance, defined as $ \Delta = 0.011 - 0.012i $  \cite{lipa1982}. Also, $ \delta $ denotes the delta function. 

The significant wave concept, which is devised to represent the complex conditions of the sea surface, serves as an indicator of the average wave state at a given location. This wave height is designated as the significant wave height $H_s$. Its period is designated as the significant wave period. The relation between $H_s$ and the wave spectrum $S(p, q)$ is given as
\begin{align}
\label{height}
    H_s = 4\sqrt{\int_{\mathbb{R}^2} S(p, q) \, dp \, dq}
\end{align}
based on the formulation in \cite{Justin2008}.

Several approaches have been proposed for estimating significant wave heights from HF radar data: the Barrick method is one such approach \cite{Barrick1977a, Barrick1977b}. This method uses the approximate equation obtained from \eqref{main1} between the Doppler spectra and the frequency spectrum, which is derived by integrating the wave spectrum over angles. By applying this relation with \eqref{height}, significant wave heights can be computed. An extended method has also been proposed by which the relation between the Doppler spectrum and the significant wave height is modified \cite{Maresca1980, Heron1998, Ramos2009}.

For estimation of the wave spectrum, various approaches use both the second-order Doppler spectrum and the relation expressed in \eqref{main1}. Some techniques reformulate \eqref{main1} as a linear equation by imposing constraints on $S$ \cite{Lipa1977, Lipa1978}. More recently, methods for solving the nonlinear equation directly without resorting to linearization have been proposed \cite{Hisaki1996, hashimoto1999, hashimoto2003, Wyatt2006, Hisaki2015, kataoka2016, Guiomar2019}.

Various methods exist for estimating the wave spectrum from the second-order Doppler spectrum. However, very few studies have investigated the problem in a mathematically rigorous manner.

As described herein, we aim at formulating the inverse problem of estimating the wave spectrum $S$ from the second-order Doppler spectrum $\sigma_2$ using regularization methods. After investigating it mathematically, we construct the algorithm to solve this inverse problem and verify it. Additionally, we evaluate its stability and apply the algorithm to actual observed data.

The paper is organized as described hereinafter. In Section \ref{Preliminaries}, we formulate the inverse problem using nonnegative sparse regularization with the Tikhonov functional. To this end, we eliminate the delta function from \eqref{main1} and prove that the second-order Doppler spectrum belongs to $L^2$ when the ocean wave spectrum is an element of $H^1(\mathbb{R}^2)$. Furthermore, we provide a brief explanation of the nonnegative sparse regularization approach. In Section \ref{derivative and stability}, we derive the explicit form of the Fr\'echet derivative of the Tikhonov functional, which is used to construct a numerical algorithm for solving the inverse problem defined by \eqref{main1}. Additionally, we present a stability theorem for the inverse problem. As described in Section \ref{Numerical experiment}, we perform numerical experiments. First, we introduce the model wave spectrum. Then, after assessing the performance of the reconstruction, we examine its stability through numerical experimentation. Finally, we apply the algorithm to actual observed data to evaluate its effectiveness for wave height estimation.

\section{Preliminaries}\label{Preliminaries}
We denote the right-hand side of \eqref{main1} as $A$. In other words,
\begin{align}
A[S] := 2^6 \pi k_0^4 &\sum_{m_1=\pm1} \sum_{m_2=\pm1} \int_{\mathbb{R}^2} \left|\Gamma(m_1, m_2, \omega, \bm{k_1}, \bm{k_2})\right|^2 \nonumber\\
&S(m_1 \bm{k_1}) S(m_2 \bm{k_2}) \delta(\omega - m_1 \sqrt{g k_1} - m_2 \sqrt{g k_2}) dpdq. \nonumber
\end{align}
Let $ K $ be the domain of $ \omega $. Also, assume that $ K $ satisfies the following conditions.
\begin{enumerate}
    \item The set $K$ is compact.
    \label{as1}
    \item $\pm\sqrt{2gk_0}, \, \pm2\sqrt{gk_0} \notin K$.
    \label{as2}
\end{enumerate}
Under the conditions presented above, $\Gamma$ can be regarded as $\Gamma \in C(K \times \mathbb{R}^2)$ for fixed $m_1, m_2$.

\subsection{Elimination of the delta function}
For the convenience of later analysis, the delta function appearing in the integral of \eqref{main1} is eliminated. With the setting above, the following proposition holds.
\begin{prop}
\label{prop1}
Assume $S \in H^1(\mathbb{R}^2)$. Then, the operator $A$ can be expressed as  
\begin{align}
\label{main}
    A[S](\omega) = 2^6 \pi k_0^4 \sum_{m_1=\pm1} \sum_{m_2=\pm1} \int_{f_{m_1, m_2}^{-1}(\omega)} \left| \Gamma(m_1, m_2, \omega, \bm{k_1}, \bm{k_2}) \right|^2 
    \frac{S(m_1 \bm{k_1}) S(m_2 \bm{k_2})}{\left| \nabla f_{m_1, m_2}(p, q) \right|} \, ds.
\end{align}  
Therein, $f_{m_1, m_2}(p, q) :=m_1 \sqrt{gk_1} + m_2 \sqrt{gk_2}$, and $ds$ is the surface measure on $f_{m_1,m_2}^{-1}(\omega)$. Furthermore, $A: H^1(\mathbb{R}^2) \to L^2(K)$.
\end{prop}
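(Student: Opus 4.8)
The plan is to prove the two assertions separately: first the elimination of the delta function by the coarea formula, and then the $L^2(K)$ mapping property by duality. Throughout I fix a sign pair $(m_1,m_2)$ and write $k_1 = ((p-k_0)^2+q^2)^{1/2}$, $k_2=((p+k_0)^2+q^2)^{1/2}$, so that $f_{m_1,m_2}=m_1\sqrt{gk_1}+m_2\sqrt{gk_2}$.

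For the delta function, I would read each summand as the pushforward under $f_{m_1,m_2}$ of the measure $|\Gamma|^2 S(m_1\bm{k}_1)S(m_2\bm{k}_2)\,dp\,dq$, and identify its density by testing against an arbitrary $\psi\in C_c(K)$ and invoking the coarea formula:
\[
\int_{\mathbb{R}^2} |\Gamma|^2 S(m_1\bm{k}_1)S(m_2\bm{k}_2)\,\psi\!\circ\! f_{m_1,m_2}\,dp\,dq
= \int_{\mathbb{R}} \psi(\omega)\left(\int_{f_{m_1,m_2}^{-1}(\omega)} \frac{|\Gamma|^2 S(m_1\bm{k}_1)S(m_2\bm{k}_2)}{|\nabla f_{m_1,m_2}|}\,ds\right)d\omega.
\]
This is exactly the distributional meaning of the integral against $\delta(\omega-f_{m_1,m_2})$ and reads off the claimed density. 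The one hypothesis the coarea formula needs is that $\nabla f_{m_1,m_2}$ not vanish on the relevant level sets, so that $f_{m_1,m_2}^{-1}(\omega)$ is a genuine $1$-manifold with arclength $ds$. I would check this by a direct computation: solving $\nabla f_{m_1,m_2}=0$ forces $(p,q)=(0,0)$ together with $m_1=m_2$, whose critical value is $\pm 2\sqrt{gk_0}$. Since Assumption (\ref{as2}) removes $\pm 2\sqrt{gk_0}$ from $K$, every $\omega\in K$ is a regular value and $|\nabla f_{m_1,m_2}|>0$ on $f_{m_1,m_2}^{-1}(\omega)$. (The exclusion of $\pm\sqrt{2gk_0}$ is the separate condition keeping $(2gk_0+\omega^2)/(2gk_0-\omega^2)$, hence $\Gamma$, continuous, as already recorded above.)

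For the mapping property I would estimate $\|A[S]\|_{L^2(K)}$ by duality, reversing the same coarea identity: for $\psi\in L^2(K)$, the pairing $\int_K A[S]\,\psi\,d\omega$ equals a sum of integrals $\int_{\Omega_{m_1,m_2}}|\Gamma|^2 S(m_1\bm{k}_1)S(m_2\bm{k}_2)\,(\psi\!\circ\! f_{m_1,m_2})\,dp\,dq$ over $\Omega_{m_1,m_2}:=f_{m_1,m_2}^{-1}(K)$, so the awkward $1/|\nabla f_{m_1,m_2}|$ weight disappears. The two factors $S(m_1\bm{k}_1)$ and $S(m_2\bm{k}_2)$ are affine (shift-and-reflect) pullbacks of $S$, so the Sobolev embedding $H^1(\mathbb{R}^2)\hookrightarrow L^4(\mathbb{R}^2)$ gives $S(m_1\bm{k}_1)S(m_2\bm{k}_2)\in L^2(\mathbb{R}^2)$ with norm bounded by $\|S\|_{H^1}^2$. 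On the same-sign branches $m_1=m_2$ the resonance curve is compact, since $\sqrt{gk_1}+\sqrt{gk_2}=\omega$ bounds $k_1,k_2$; there $\Gamma$ is bounded by continuity on a compact set and the coarea weight $\int_{f^{-1}(\omega)}ds/|\nabla f_{m_1,m_2}|$ is finite and uniform in $\omega$, so $\|\psi\!\circ\! f_{m_1,m_2}\|_{L^2(\Omega)}\le C\|\psi\|_{L^2(K)}$, and Cauchy--Schwarz against the $L^2(\mathbb{R}^2)$ bound on the product closes these terms at once.

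The step I expect to be the genuine obstacle is the opposite-sign branches $m_1=-m_2$. There the resonance set $\sqrt{gk_1}-\sqrt{gk_2}=\omega$ is non-compact --- $k_1$ and $k_2$ may both tend to infinity while their difference stays bounded --- and along it $|\Gamma|$ grows linearly in $r=|(p,q)|$ while $|\nabla f_{m_1,m_2}|$ degenerates, so that neither $\|\psi\!\circ\! f_{m_1,m_2}\|_{L^2(\Omega)}$ nor $\sup|\Gamma|$ is finite and the crude same-sign argument fails. Here one cannot rely on boundedness of $\Gamma$; instead one must exploit the decay of $S$ encoded in the $H^1$ norm, matched against the precise large-$r$ asymptotics of $|\nabla f_{m_1,m_2}|$ and of the arclength element along the branch, and feed these into a weighted Cauchy--Schwarz on each level curve before integrating in $\omega$. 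Making this quantitative --- so that the growth of $|\Gamma|^2$ is absorbed by the integrability of $S(m_1\bm{k}_1)S(m_2\bm{k}_2)$ over the non-compact branch --- is the technical heart of the proof, after which the summation over the four sign choices and the constant $2^6\pi k_0^4$ is routine.
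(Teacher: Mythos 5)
Your first half is sound and close in spirit to the paper's: testing against $\psi\in C_c(K)$ and invoking the coarea formula is the same fact the paper cites from H\"ormander (Theorem 6.1.5), and your identification of the critical points --- $\nabla f_{m_1,m_2}=0$ only at the origin and only when $m_1=m_2$, with critical values $\pm 2\sqrt{gk_0}$, while the non-smooth points $(\pm k_0,0)$ carry the values $\pm\sqrt{2gk_0}$ --- is correct and in fact explains condition (2) on $K$ more precisely than the paper does. Your duality argument for the mapping property (pairing with $\psi\in L^2(K)$ so the $1/|\nabla f_{m_1,m_2}|$ weight cancels, then $H^1(\mathbb{R}^2)\hookrightarrow L^4(\mathbb{R}^2)$ and Cauchy--Schwarz) is a legitimate alternative to the paper's route, which instead proves a pointwise bound $|A[S](\omega)|\le L_1\|S\|_{H^1(\mathbb{R}^2)}^2$ uniform in $\omega$ via H\"older on the level curve plus the trace theorem $H^1(\Omega)\to L^2(\partial\Omega)$, and then uses compactness of $K$.

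The genuine gap is that you never prove anything for the branches $m_1=-m_2$: you declare them ``the technical heart'' and stop, so the proposal is not a proof. Moreover, the diagnosis that stops you is mistaken. Writing $f_{1,-1}=\sqrt{g}\,(\sqrt{k_1}-\sqrt{k_2})=\sqrt{g}\,(k_1-k_2)/(\sqrt{k_1}+\sqrt{k_2})$ and noting $|k_1-k_2|\le 2k_0$, one sees that $f_{1,-1}(p,q)\to 0$ as $|(p,q)|\to\infty$; hence for every $\omega\neq 0$ the level set $f_{1,-1}^{-1}(\omega)$ is compact, and $f_{1,-1}^{-1}(K)$ is compact whenever the compact set $K$ stays away from $0$. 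On such a set $\Gamma$ is bounded and $|\nabla f_{1,-1}|$ is bounded below, so exactly your same-sign argument (or the paper's trace-theorem bound, which treats all four sign pairs uniformly) closes these terms; no weighted asymptotic analysis is needed. The sequences you worry about, with $k_1,k_2\to\infty$ and bounded difference, satisfy $f_{1,-1}\to 0$ and therefore leave every level set with $\omega\neq 0$. The only genuinely singular frequency is $\omega=0$, where $f_{1,-1}^{-1}(0)$ is the entire line $p=0$ and $A[S](0)$ need not be finite; neither your proposal nor the paper's stated conditions (1)--(2) exclude $0\in K$, and the paper's assertion that $f_{m_1,m_2}^{-1}(K)$ is ``closed and bounded'' silently requires it. So to complete your proof you need only: add the hypothesis $0\notin K$ (or note that it is implicit in the paper), prove compactness of $f_{m_1,m_2}^{-1}(K)$ for all four sign pairs as above, and then run your same-sign argument verbatim on the opposite-sign branches.
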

\begin{proof}
Consider $\mathbb{R}^2 \setminus \{(\pm k_0, 0), (0, 0)\}$. In this domain, the function 
\begin{align}
    f_{m_1, m_2}(p, q) &= m_1 \sqrt{gk_1} + m_2 \sqrt{gk_2}\notag\\
    &=m_1\sqrt{g}\{(p-k_0)^2+q^2\}^{\frac{1}{4}}+m_2\sqrt{g}\{(p+k_0)^2+q^2\}^{\frac{1}{4}}\notag
\end{align}
is of class $C^\infty$ and $f_{m_1, m_2} \neq 0$. From
\begin{align}
    \frac{\partial f_{m_1, m_2}}{\partial p} 
    &= -\frac{1}{2} m_1  \sqrt{g} (p - k_0) \{ (p - k_0)^2 + q^2 \}^{-\frac{3}{4}} 
    - \frac{1}{2} m_2  \sqrt{g} (p + k_0) \{ (p + k_0)^2 + q^2 \}^{-\frac{3}{4}}, \notag \\
    \frac{\partial f_{m_1, m_2}}{\partial q} 
    &= -\frac{1}{2} m_1  \sqrt{g} q \{ (p + k_0)^2 + q^2 \}^{-\frac{3}{4}} 
    - \frac{1}{2} m_2  \sqrt{g} q \{ (p + k_0)^2 + q^2 \}^{-\frac{3}{4}},\notag
\end{align}
$f_{m_1, m_2}$ satisfies $|\nabla f_{ m_1, m_2}| \neq 0$. Thus, applying [\citealp{hörmander2015analysis}, Theorem 6.1.5, p.136] to $\delta(\omega - f_{m_1,m_2}(p,q))=|\nabla f_{m_1,m_2}(p,q)|^{-1}ds$, we obtain the result presented below:
\begin{align}
    A[S](\omega) = 2^6 \pi k_0^4 \sum_{m_1=\pm1} &\sum_{m_2=\pm1} \int_{\mathbb{R}^2\setminus \{(\pm k_0, 0), (0, 0)\}} \left|\Gamma(m_1, m_2, \omega, \bm{k_1}, \bm{k_2})\right|^2 \nonumber\\
    &S(m_1 \bm{k_1}) S(m_2 \bm{k_2}) \delta(\omega - m_1 \sqrt{g k_1} - m_2 \sqrt{g k_2}) dpdq \nonumber\\
    =2^6 \pi k_0^4 \sum_{m_1=\pm1} &\sum_{m_2=\pm1} \int_{f_{ m_1, m_2}^{-1}(\omega)} \left| \Gamma(m_1, m_2, \omega, \bm{k_1}, \bm{k_2}) \right|^2 
    \frac{S(m_1 \bm{k_1}) S(m_2 \bm{k_2})}{\left| \nabla f_{m_1, m_2}(p, q) \right|} \, ds.\nonumber
\end{align}
For any $ \omega \in K \cap(-\sqrt{2gk_0}, \sqrt{2gk_0})$, 
\begin{align}
    |A[S](\omega)| &=C\left|\sum_{m_1=\pm1} \sum_{m_2=\pm1}\int_{f_{m_1, m_2}^{-1}(\omega)} \left| \Gamma(m_1, m_2, \omega, \bm{k_1}, \bm{k_2}) \right|^2 
        \frac{S(m_1 \bm{k_1}) S(m_2 \bm{k_2})}{\left| \nabla f_{m_1, m_2}(p, q) \right|} \, ds\right|\notag\\
        &\leq C \sum_{m_1=\pm1} \sum_{m_2=\pm1}\int_{f_{ m_1, m_2}^{-1}(\omega)} \frac{|\Gamma|^2 }{|\nabla f_{ m_1, m_2}|} |S(m_1 \bm{k_1})| |S(m_2 \bm{k_2})|\, ds\notag
\end{align}
holds. Here, we put $C= 2^6 \pi k_0^4$. Since $ f_{m_1,m_2} $ is a continuous function, the inverse image $ f_{m_1,m_2}^{-1}(K) = \{(p,q) \in \mathbb{R}^2 \, | \, m_1\sqrt{g}\{(p-k_0)^2+q^2\}^{\frac{1}{4}}+m_2\sqrt{g}\{(p+k_0)^2+q^2\}^{\frac{1}{4}}\in K\} $ is a closed and bounded set. 
Thus, $ |\Gamma|^2 / |\nabla f_{m_1,m_2}| $ has its maximum value $M$ on $K\times f_{m_1,m_2}^{-1}(K) $. Then, we have
\begin{align}
    |A[S](\omega)| 
    &\leq CM \sum_{m_1=\pm1} \sum_{m_2=\pm1}\int_{f_{m_1, m_2}^{-1}(\omega)} |S(m_1 \bm{k_1})| |S(m_2 \bm{k_2})| \, ds.\notag
\end{align}
Since $ f_{m_1, m_2}^{-1}(\omega) $ is bounded for any $ \omega \in K$, we define the domain enclosed by the curve $ f_{m_1, m_2}^{-1}(\omega) $ as $ \Omega $. By applying H\"older's inequality and the trace theorem (see \citep[Theorem 1, p.258]{evans2010} and \citep[pp.315--316]{brezis2010functional}), we obtain
\begin{align}
    |A[S](\omega)| &\leq C M \sum_{m_1=\pm1} \sum_{m_2=\pm1}\int_{f_{m_1, m_2}^{-1}(\omega)} |S(m_1 \bm{k_1})| |S(m_2 \bm{k_2})| \, ds\notag\\
    &\leq C M\sum_{m_1=\pm1} \sum_{m_2=\pm1}\|S(m_1 \bm{k_1})\|_{L^2(\partial\Omega)} \|S(m_2 \bm{k_2})\|_{L^2(\partial\Omega)}\nonumber\\
    &\leq C M\sum_{m_1=\pm1} \sum_{m_2=\pm1}C_1\|S(m_1 \bm{k_1})\|_{H^1(\Omega)} C_2 \|S(m_2 \bm{k_2})\|_{H^1(\Omega)}\nonumber\\
    &\leq C M\sum_{m_1=\pm1} \sum_{m_2=\pm1}C_1\|S(m_1 \bm{k_1})\|_{H^1(\mathbb{R}^2)} C_2 \|S(m_2 \bm{k_2})\|_{H^1(\mathbb{R}^2)}\nonumber\\
    &\leq L_1\|S\|^2_{H^1(\mathbb{R}^2)},\notag
\end{align}
where $C_1,C_2,L_1>0$ are constants. The same argument is applied for the case in which $ \omega \in K \cap \left(-\sqrt{2gk_0}, \sqrt{2gk_0}\right)^c $. Therefore, we infer $ A[S] \in L^2(K) $ for any $ S \in H^1(\mathbb{R}^2) $.

\end{proof}

\subsection{Nonnegative sparse regularization}
To address the inverse problem from the second-order Doppler spectrum to the wave spectrum, the fact that the wave spectrum takes nonnegative values must be considered. Therefore, we adopt the nonnegative sparse regularization method proposed by \cite{Muoi2018}.

We introduce the general form of the nonnegative sparse regularization. Using this method, the following minimization problem for the observed data $y^\delta \in Y$,
\begin{align}
\label{main2}
   \arginf_{S \in X}\Theta(S) := F(S; y^\delta) + \alpha \Phi(S)
\end{align}
is considered, where $X,Y$ are Hilbert spaces and $\alpha > 0$. In this method, the solution of \eqref{main2} is defined as the approximate solution of the inverse problem. We assume the following conditions for $ F(\cdot; y^\delta) $.
\begin{assumption}
\label{assu1}
    Let $ X $ and $ Y $ be real Hilbert spaces and let $ \{ \varphi_k \}_{k \in \Lambda} $ be an orthonormal basis of $ X $.
    \begin{enumerate}
        \item $ \operatorname{Dom}(F(\cdot; y^\delta)) = X $ holds for any $ y^\delta \in Y $, and for each $ y^\delta \in Y $, $ F(\cdot; y^\delta) $ is bounded from below and weakly lower semicontinuous. Without loss of generality, we assume $ F(u; y^\delta) \geq 0 $ for all $ u \in X $.

        \item $F(\cdot; y^\delta)$ is Fr\'echet differentiable on $X$.
    \end{enumerate}
\end{assumption}
\noindent
Let $\{u_i\}$ be the coefficients of $u \in X$ when represented using the basis $\{\varphi_i\}$. Then, $\Phi(u)$ is given as
\[
\Phi(u) = \begin{cases}
\sum_{i \in \Lambda} u_{i}, & \text{if } u \geq 0, \\
+\infty, & \text{otherwise}.
\end{cases}
\]

As described herein, the observed data $y^\delta$ correspond to the second-order Doppler spectrum $\sigma_2 \in L^2(K)$. Assuming $X = H^1(\mathbb{R}^2)$, and letting $S\in H^1(\mathbb{R}^2)$ denote the wave spectrum, then the Tikhonov functional is defined as 
\[
F(S;\sigma_2) = \|A[S] - \sigma_2\|_{L^2(K)}^2 + \lambda \|S\|_{L^2(\mathbb{R}^2)}^2,
\]
where $\lambda > 0$ is the regularization parameter. For this study, we define $\Phi(S)$ as 
\[
\Phi(S) = \begin{cases}
\sum_{i,j \in \Lambda} S_{i,j}, & \text{if } S \geq 0, \\
+\infty, & \text{otherwise}.
\end{cases}
\]
Therein, $S_{i,j}$ represents the $(i,j)$-th component of discretized $S$ with an $N \times N$ Cartesian grid, where $N$ is a natural number and $\Lambda$ is the subscript set. This term ensures that $S \geq 0$, preventing $\Theta$ from being minimized otherwise; this term also promotes sparsity. Regarding the functional $ \Theta $ in \eqref{main2}, the result is known.

\begin{thm}[\mbox{\citep[Theorem 2.1, p.4]{Muoi2018}}]
\label{the2}
    Under Assumption \ref{assu1}, the following holds:
    \begin{enumerate}
        \item The minimization problem \eqref{main2} has at least one nonnegative solution. In other words, the functional $\Theta$ has at least one nonnegative global minimizer.

        \item Any local minimizer $u$ of $\Theta$ satisfies
        \[
        u = \mathbb{P}_{s \alpha} \left( u - s \nabla F(u; y^\delta) \right),
        \]
        for any $s > 0$, where the proximity operator $ \mathbb{P}_{\xi} $ is defined as
        \[
        \mathbb{P}_{\xi}(u) = \sum_{i \in \Lambda} \max(u_i - \xi, 0) \varphi_i,
        \]
        with $u_i = \langle u, \varphi_i \rangle_X$.

        \item If $u$ is a local minimizer of the functional $\Theta$, then $u$ is nonnegative and sparse. Here, sparsity means that when $u$ is expressed using the orthonormal basis $\{ \varphi_k \}_{k \in \Lambda}$, only a finite number of coefficients are nonzero, whereas all others are zero.

    \end{enumerate}
\end{thm}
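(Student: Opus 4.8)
The plan is to treat the three assertions in turn, using the direct method for existence and a convex-analytic first-order condition for the characterization of minimizers.

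For part (1), I would apply the direct method of the calculus of variations. Take a minimizing sequence $\{u_n\}$ for $\Theta$; since $F(\cdot;y^\delta)\geq 0$ and $\Phi\geq 0$ on its effective domain, $\inf\Theta$ is finite and $\Phi(u_n)$ stays bounded along the sequence. The crucial observation is that on the nonnegative cone $\Phi(u)=\sum_{i\in\Lambda}u_i=\sum_{i\in\Lambda}|u_i|$ is exactly the $\ell^1$-norm of the coefficient sequence, so a bound on $\Phi(u_n)$ yields a bound on $\|u_n\|_X=(\sum_i u_{n,i}^2)^{1/2}\leq \Phi(u_n)$. Hence $\{u_n\}$ is bounded in the Hilbert space $X$ and, after passing to a subsequence, converges weakly to some $u$. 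The nonnegative cone is convex and closed, hence weakly closed, so $u\geq 0$; and $\Phi$ is weakly lower semicontinuous on the cone because there it equals the pointwise supremum of the finite partial sums $\sum_{i\in I}\langle\cdot,\varphi_i\rangle$ over finite $I\subset\Lambda$, each of which is weakly continuous. Combining this with the assumed weak lower semicontinuity of $F$ in Assumption \ref{assu1} gives $\Theta(u)\leq\liminf_n\Theta(u_n)=\inf\Theta$, so $u$ is a nonnegative global minimizer.

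For part (2), I would first identify $\mathbb{P}_{\xi}$ as the proximity operator of $\xi\Phi$: minimizing $w\mapsto\tfrac12\|w-v\|_X^2+\xi\Phi(w)$ decouples across coefficients into $\min_{w_i\geq 0}\tfrac12(w_i-v_i)^2+\xi w_i$, whose solution is $w_i=\max(v_i-\xi,0)$, matching the stated formula. Next I would derive the first-order optimality condition from local minimality. Since $\Phi$ is convex and the cone is convex, for any $v$ in the effective domain and small $t>0$ the point $u+t(v-u)$ is feasible and lies near $u$; using Fr\'echet differentiability of $F$ and convexity of $\Phi$, expanding the inequality $\Theta(u)\leq\Theta(u+t(v-u))$ and letting $t\to 0^+$ yields the variational inequality $\langle\nabla F(u;y^\delta),v-u\rangle+\alpha(\Phi(v)-\Phi(u))\geq 0$ for all $v$, i.e. $-\nabla F(u;y^\delta)\in\partial(\alpha\Phi)(u)$. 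Finally, this inclusion is equivalent, for every $s>0$, to the statement that $u$ minimizes $w\mapsto\tfrac12\|w-(u-s\nabla F(u;y^\delta))\|_X^2+s\alpha\Phi(w)$, which is precisely $u=\mathbb{P}_{s\alpha}(u-s\nabla F(u;y^\delta))$.

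For part (3), nonnegativity is immediate: a local minimizer has $\Theta(u)<\infty$, forcing $\Phi(u)<\infty$ and hence $u\geq 0$. For sparsity I would read the fixed-point equation of part (2) coordinatewise, $u_i=\max\big(u_i-s(\nabla F(u;y^\delta))_i-s\alpha,0\big)$: wherever $u_i>0$ this forces $(\nabla F(u;y^\delta))_i=-\alpha$, so the support of $u$ is contained in $\{i:(\nabla F(u;y^\delta))_i=-\alpha\}$. Because $\nabla F(u;y^\delta)\in X$, its coefficient sequence is square-summable, so only finitely many coefficients can have absolute value $\alpha>0$; the support is therefore finite, which is the claimed sparsity. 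The steps requiring the most care are the weak lower semicontinuity of $\Phi$ in part (1)—where one must exploit the $\ell^1$/supremum-of-finite-sums structure rather than any continuity of the full series—and the passage from local minimality to the subdifferential inclusion in part (2), where the extended-real-valued, nonsmooth penalty $\Phi$ must be handled through its convexity while $F$ is only assumed differentiable, not convex. I expect the latter to be the main obstacle, since one must justify feasibility and the first-order expansion along the segment $u+t(v-u)$ carefully enough to pass to the limit $t\to 0^+$.
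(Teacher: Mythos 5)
The paper does not prove this theorem at all: it is quoted as a known result, Theorem 2.1 of the cited reference \cite{Muoi2018} (``Regarding the functional $\Theta$ in \eqref{main2}, the result is known''), so there is no in-paper argument to compare against. Judged on its own, your proof is correct and is essentially the standard argument, i.e.\ the one in that reference: the direct method using the key inequality $\|u\|_X \le \Phi(u)$ on the nonnegative cone (this is exactly Lemma 2.1 of \cite{Muoi2018}, which the present paper itself invokes later in its stability proposition), weak closedness of the convex cone together with weak lower semicontinuity of $\Phi$ as a supremum of weakly continuous finite partial sums, the variational inequality $-\nabla F(u;y^\delta)\in\partial(\alpha\Phi)(u)$ obtained along segments $u+t(v-u)$ and its equivalence with the prox fixed-point identity for every $s>0$, and finally the coordinatewise reading of that identity forcing $(\nabla F(u;y^\delta))_i=-\alpha$ on the support of $u$, which must then be finite because $\nabla F(u;y^\delta)\in X$ has square-summable coefficients. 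The one point to state more carefully is your claim in part (3) that a local minimizer automatically has $\Theta(u)<\infty$: if ``local minimizer'' is read without requiring membership in the effective domain, then any $u$ with a strictly negative coefficient $u_{i_0}<0$ would qualify vacuously, since $\Theta\equiv+\infty$ on the whole ball of radius $|u_{i_0}|$ around $u$, and such a $u$ is neither nonnegative nor sparse. So you should make explicit the convention (implicit in the cited reference) that local minimizers are points of the effective domain of $\Theta$; with that convention, parts (2) and (3), both of which use $\Phi(u)<\infty$, go through exactly as you wrote them.
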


\section{Fr\'echet derivative of Tikhonov functional and stability estimate}\label{derivative and stability}
To develop a numerical algorithm, we compute the Fr\'echet derivative of $F(S; \sigma_2)$. For this purpose, we first calculate the Fr\'echet derivative of $A[S]$.
\begin{prop}
\label{prop3}
    Map $A[S]$ is Fr\'echet differentiable. Its Fr\'echet derivative at $ S $ is given as
    \begin{align}
    \label{A_derivative}
        A'[S]h = 2^7 \pi k_0^4 \sum_{m_1 = \pm 1} \sum_{m_2 = \pm 1} \int_{f_{ m_1, m_2}^{-1}(\omega)} \frac{|\Gamma|^2}{|\nabla f_{m_1, m_2}|} S(m_1 \bm{k_1}) h(m_2 \bm{k_2}) ds,
    \end{align}
where $h \in H^1(\mathbb{R}^2)$.
\end{prop}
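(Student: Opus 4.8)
The plan is to exploit that $A$ is a \emph{quadratic} operator and to reduce Fr\'echet differentiability to the boundedness of the associated bilinear form. For $S,T\in H^1(\mathbb{R}^2)$ set
\[
B(S,T)(\omega):=2^6\pi k_0^4\sum_{m_1=\pm1}\sum_{m_2=\pm1}\int_{f_{m_1,m_2}^{-1}(\omega)}\frac{|\Gamma|^2}{|\nabla f_{m_1,m_2}|}\,S(m_1\bm{k_1})\,T(m_2\bm{k_2})\,ds,
\]
so that $A[S]=B(S,S)$. First I would record that $B$ is a bounded bilinear map $H^1(\mathbb{R}^2)\times H^1(\mathbb{R}^2)\to L^2(K)$, namely $\|B(S,T)\|_{L^2(K)}\le L_1\|S\|_{H^1(\mathbb{R}^2)}\|T\|_{H^1(\mathbb{R}^2)}$ for some $L_1>0$. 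This is the bilinear analogue of the estimate already proved in Proposition \ref{prop1} and is obtained by repeating that argument verbatim: bound $|\Gamma|^2/|\nabla f_{m_1,m_2}|$ by its maximum on the relevant compact set, apply H\"older's inequality along the curve $f_{m_1,m_2}^{-1}(\omega)$, and then apply the trace theorem to each factor separately to pass from $\|\cdot\|_{L^2(\partial\Omega)}$ to $\|\cdot\|_{H^1(\mathbb{R}^2)}$.

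Next I would expand the increment. Substituting $S+h$ into both arguments gives
\[
A[S+h]=B(S+h,S+h)=A[S]+B(S,h)+B(h,S)+B(h,h),
\]
so the linear-in-$h$ part is $B(S,h)+B(h,S)$ and the remainder is the purely quadratic term $B(h,h)$.

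The crux — and the step I expect to be the main obstacle — is the symmetry identity $B(h,S)=B(S,h)$, which collapses $B(S,h)+B(h,S)$ into $2B(S,h)$ and thereby produces the constant $2^7\pi k_0^4$ of \eqref{A_derivative}. I would prove it by performing the change of variables $(p,q)\mapsto(-p,-q)$ in the integrals defining $B(h,S)$ and simultaneously relabeling the summation indices $m_1\leftrightarrow m_2$. Under this map one verifies that $\bm{k_1}\leftrightarrow\bm{k_2}$ and $k_1\leftrightarrow k_2$; that $f_{m_2,m_1}(-p,-q)=f_{m_1,m_2}(p,q)$, so the level set $f_{m_1,m_2}^{-1}(\omega)$ is carried onto $f_{m_2,m_1}^{-1}(\omega)$; that $|\nabla f_{m_2,m_1}(-p,-q)|=|\nabla f_{m_1,m_2}(p,q)|$ by the chain rule; and that $ds$ is preserved because $(p,q)\mapsto(-p,-q)$ is an isometry. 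The remaining point is the invariance of $|\Gamma|^2$ under the simultaneous swap of $(m_1,m_2)$ and $(\bm{k_1},\bm{k_2})$: the factor $\Gamma_E$ is symmetric in $\bm{k_1},\bm{k_2}$ and independent of $m_1,m_2$, while $\Gamma_H$ depends on $m_1,m_2$ only through the symmetric product $m_1m_2$ and on $\bm{k_1},\bm{k_2}$ only through the symmetric quantities $k_1+k_2$, $k_1k_2$ and $\bm{k_1}\cdot\bm{k_2}$. Together these invariances turn the integrand $h(m_1\bm{k_1})S(m_2\bm{k_2})$ of $B(h,S)$ into $S(m_1\bm{k_1})h(m_2\bm{k_2})$ against an identical kernel over the same curve, which is exactly the integrand of $B(S,h)$.

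Finally I would conclude. Setting $A'[S]h:=2B(S,h)$, the bilinear bound shows that $h\mapsto A'[S]h$ is a bounded linear operator with $\|A'[S]h\|_{L^2(K)}\le 2L_1\|S\|_{H^1(\mathbb{R}^2)}\|h\|_{H^1(\mathbb{R}^2)}$, and the same bound applied to the remainder gives
\[
\|A[S+h]-A[S]-A'[S]h\|_{L^2(K)}=\|B(h,h)\|_{L^2(K)}\le L_1\|h\|_{H^1(\mathbb{R}^2)}^2=o\!\left(\|h\|_{H^1(\mathbb{R}^2)}\right)
\]
as $h\to0$. Hence $A$ is Fr\'echet differentiable with derivative \eqref{A_derivative}. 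The boundedness and the remainder estimate are immediate from Proposition \ref{prop1}; the only genuine work lies in the symmetry step above.
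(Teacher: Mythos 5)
Your proposal is correct and follows essentially the same route as the paper's own proof: expand the quadratic operator so the increment splits into a linear part plus a term quadratic in $h$, collapse the two cross terms into one via the change of variables $(p,q)\mapsto(-p,-q)$ together with the symmetry of $\Gamma$, and control both the derivative and the remainder by the boundedness argument of Proposition \ref{prop1}. Your write-up is in fact slightly more careful than the paper's at the symmetry step, since you explicitly track that the level set $f_{m_1,m_2}^{-1}(\omega)$ is carried onto $f_{m_2,m_1}^{-1}(\omega)$ and that the summation indices $m_1\leftrightarrow m_2$ must be relabeled (using that $\Gamma$ depends on them only through $m_1m_2$), details the paper leaves implicit.
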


\begin{proof}
    Let the operator $T$ be given by the right-hand side of \eqref{A_derivative}. For any $\omega \in K$,
    \begin{align}
    |Th(\omega)|= \bigg|2^7 \pi k_0^4 \sum_{m_1 = \pm 1} \sum_{m_2 = \pm 1} \int_{f_{ m_1, m_2}^{-1}(\omega)} \frac{|\Gamma|^2}{|\nabla f_{m_1, m_2}|} S(m_1 \bm{k_1}) h(m_2 \bm{k_2}) ds\bigg|\notag\\
    \le 2^7 \pi k_0^4 \sum_{m_1 = \pm 1} \sum_{m_2 = \pm 1} \int_{f_{ m_1, m_2}^{-1}(\omega)} \frac{|\Gamma|^2}{|\nabla f_{m_1, m_2}|} |S(m_1 \bm{k_1})| |h(m_2 \bm{k_2})| ds\notag.
\end{align}
As in Proposition \ref{prop1}, the set $K\times f_{m_1,m_2}^{-1}(K)$ is closed and bounded. Thus, $|\Gamma|^2/|\nabla f_{m_1,m_2}|$ has its maximum value $M$. Applying a similar transformation as in Proposition \ref{prop1}, we obtain $|Th(\omega)|\le L_2\|h\|_{H^1(\mathbb{R}^2)}$, where $L_2 > 0$ is a constant independent of $\omega$. Thus, $T$ is a bounded linear operator.

Next, we prove the differentiability of $A$. The difference $A[S+h]-A[S]$ is 
\begin{align}
    A[S + h] - A[S] 
    &= 2^6 \pi k_0^4\sum_{m_1 = \pm 1} \sum_{m_2 = \pm 1}  \int_{f_{ m_1, m_2}^{-1}(\omega)} \frac{|\Gamma|^2}{|\nabla f_{m_1, m_2}|}
    \big\{S(m_1 \bm{k_1}) + h(m_1 \bm{k_1})\big\} \notag \\
    &\phantom{=} \quad \times \big\{S(m_2 \bm{k_2}) + h(m_2 \bm{k_2})\big\} \, ds \notag \\
    &\phantom{=} \quad - 2^6 \pi k_0^4 \sum_{m_1 = \pm 1} \sum_{m_2 = \pm 1} \int_{f_{m_1, m_2}^{-1}(\omega)} \frac{|\Gamma|^2}{|\nabla f_{m_1, m_2}|} 
    S(m_1 \bm{k_1}) S(m_2 \bm{k_2}) \, ds \notag \\
    &= 2^6 \pi k_0^4 \sum_{m_1 = \pm 1} \sum_{m_2 = \pm 1} \int_{f_{ m_1, m_2}^{-1}(\omega)} \frac{|\Gamma|^2}{|\nabla f_{m_1, m_2}|} 
    \big\{S(m_1 \bm{k_1}) h(m_2 \bm{k_2}) + \notag \\
    &\phantom{=} \quad S(m_2 \bm{k_2}) h(m_1 \bm{k_1})\big\} \, ds \notag \\
    &\phantom{=} \quad + 2^6 \pi k_0^4 \sum_{m_1 = \pm 1} \sum_{m_2 = \pm 1} \int_{f_{m_1, m_2}^{-1}(\omega)} \frac{|\Gamma|^2}{|\nabla f_{m_1, m_2}|} 
    h(m_1 \bm{k_1}) h(m_2 \bm{k_2}) \, ds \notag \\
    &=2^6 \pi k_0^4 \sum_{m_1 = \pm 1} \sum_{m_2 = \pm 1} \int_{f_{ m_1, m_2}^{-1}(\omega)} \frac{|\Gamma|^2}{|\nabla f_{m_1, m_2}|} 
    S(m_1 \bm{k_1}) h(m_2 \bm{k_2}) ds\notag \\
    &\phantom{=} \quad + 2^6 \pi k_0^4 \sum_{m_1 = \pm 1} \sum_{m_2 = \pm 1} \int_{f_{ m_1, m_2}^{-1}(\omega)} \frac{|\Gamma|^2}{|\nabla f_{m_1, m_2}|} S(m_2 \bm{k_2}) h(m_1 \bm{k_1}) ds\notag \\
    &\phantom{=} \quad + 2^6 \pi k_0^4 \sum_{m_1 = \pm 1} \sum_{m_2 = \pm 1} \int_{f_{m_1, m_2}^{-1}(\omega)} \frac{|\Gamma|^2}{|\nabla f_{m_1, m_2}|} 
    h(m_1 \bm{k_1}) h(m_2 \bm{k_2}) \, ds\notag\\
    &=: I + II + III. \notag
\end{align}
By a change of variables $ p \mapsto -p $ and $ q \mapsto -q $,
\begin{align}
    I=2^6 \pi k_0^4 \sum_{m_1 = \pm 1} \sum_{m_2 = \pm 1} &\int_{f_{ m_1, m_2}^{-1}(\omega)} \frac{|\Gamma(m_1, m_2, \omega, \bm{k_1}, \bm{k_2})|^2}{|\nabla f_{m_1, m_2}|}
    S(m_1 \bm{k_1}) h(m_2 \bm{k_2}) ds\notag \\
    =2^6 \pi k_0^4\sum_{m_1=\pm1} \sum_{m_2=\pm1} &\int_{f_{ m_1, m_2}^{-1}(\omega)} \frac{\left|\Gamma(m_1, m_2, \omega, \bm{k_2}, \bm{k_1})\right|^2}{|\nabla f_{m_1, m_2}|} S(m_1 \bm{k_2}) h(m_2 \bm{k_1}) ds.\notag
\end{align}
From the symmetry of $\Gamma$ with respect to $\bm{k_1}$ and $\bm{k_2}$,
\begin{align}
    I=&2^6 \pi k_0^4\sum_{m_1=\pm1} \sum_{m_2=\pm1} \int_{f_{ m_1, m_2}^{-1}(\omega)} \frac{\left|\Gamma(m_1, m_2, \omega, \bm{k_2}, \bm{k_1})\right|^2}{|\nabla f_{m_1, m_2}|} S(m_1 \bm{k_2}) h(m_2 \bm{k_1}) ds\notag\\
    =&2^6 \pi k_0^4\sum_{m_1=\pm1} \sum_{m_2=\pm1} \int_{f_{ m_1, m_2}^{-1}(\omega)} \frac{\left|\Gamma(m_1, m_2, \omega, \bm{k_1}, \bm{k_2})\right|^2}{|\nabla f_{m_1, m_2}|} S(m_1 \bm{k_2}) h(m_2 \bm{k_1}) ds\notag\\
    =&II\notag.
\end{align}
Thus, $A[S+h]-A[S]=2I+III=Th+III$. Using a similar argument to that of Proposition \ref{prop1}, we can prove $III \le L_3\|h\|^2_{H^1(\mathbb{R}^2)}$, where $L_3>0$ is a constant. It follows from the above that $A[S+h]-A[S]-Th=o(\|h\|_{H^1(\mathbb{R}^2)})$, which implies that $T$ is the Fr\'echet derivative of $A$ at $S$.
\end{proof}

By applying Proposition \ref{prop3}, the explicit form of the Fr\'echet derivative of $ F $ and its gradient can be obtained.
\begin{corollary}
\label{cor4}
    The functional $F$ is Fr\'echet differentiable.
\end{corollary}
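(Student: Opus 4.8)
The plan is to write $F(S;\sigma_2)$ as a sum of two functionals and differentiate each by the sum and chain rules for Fréchet derivatives. I would set
\[
F_1(S) = \|A[S] - \sigma_2\|_{L^2(K)}^2, \qquad F_2(S) = \lambda\|S\|_{L^2(\mathbb{R}^2)}^2,
\]
so that $F = F_1 + F_2$. Since a finite sum of Fréchet differentiable maps is again Fréchet differentiable, it suffices to treat each term separately and then add the resulting derivatives.

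For $F_1$, I would factor it as $F_1 = N \circ A$, where $N\colon L^2(K) \to \mathbb{R}$ is the squared-distance functional $N(u) = \|u - \sigma_2\|_{L^2(K)}^2$; here I use that $\sigma_2 \in L^2(K)$. Expanding $\|u + v - \sigma_2\|_{L^2(K)}^2$ shows that $N$ is Fréchet differentiable on the Hilbert space $L^2(K)$ with $N'(u)v = 2\langle u - \sigma_2, v\rangle_{L^2(K)}$, the remainder $\|v\|_{L^2(K)}^2$ being $o(\|v\|_{L^2(K)})$. Because $A\colon H^1(\mathbb{R}^2) \to L^2(K)$ is Fréchet differentiable by Proposition \ref{prop3}, the chain rule then gives
\[
F_1'[S]h = N'(A[S])\bigl(A'[S]h\bigr) = 2\bigl\langle A[S] - \sigma_2,\, A'[S]h \bigr\rangle_{L^2(K)},
\]
with $A'[S]h$ given explicitly by \eqref{A_derivative}.

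For $F_2$, the one point to watch is that the norm is taken in $L^2(\mathbb{R}^2)$ while the underlying space is $H^1(\mathbb{R}^2)$. Expanding $\|S+h\|_{L^2(\mathbb{R}^2)}^2 = \|S\|_{L^2(\mathbb{R}^2)}^2 + 2\langle S,h\rangle_{L^2(\mathbb{R}^2)} + \|h\|_{L^2(\mathbb{R}^2)}^2$ and invoking the continuous embedding $H^1(\mathbb{R}^2)\hookrightarrow L^2(\mathbb{R}^2)$, the linear functional $h\mapsto 2\lambda\langle S,h\rangle_{L^2(\mathbb{R}^2)}$ is bounded on $H^1(\mathbb{R}^2)$, while the remainder obeys $\lambda\|h\|_{L^2(\mathbb{R}^2)}^2 \le \lambda\|h\|_{H^1(\mathbb{R}^2)}^2 = o(\|h\|_{H^1(\mathbb{R}^2)})$. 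Hence $F_2'[S]h = 2\lambda\langle S,h\rangle_{L^2(\mathbb{R}^2)}$.

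The step requiring the most care, and the main obstacle, will be justifying the chain rule for $F_1$, which hinges on $A'[S]$ being a genuine bounded linear operator from $H^1(\mathbb{R}^2)$ into $L^2(K)$ rather than merely a pointwise-in-$\omega$ bounded expression. Here I would reuse the estimate from the proof of Proposition \ref{prop3}: the pointwise bound $|A'[S]h(\omega)| \le L_2\|h\|_{H^1(\mathbb{R}^2)}$ holds with $L_2$ independent of $\omega$, so compactness of $K$ (whence finite measure) upgrades it to $\|A'[S]h\|_{L^2(K)} \le L_2|K|^{1/2}\|h\|_{H^1(\mathbb{R}^2)}$, confirming the required boundedness. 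Assembling the two pieces yields
\[
F'[S]h = 2\bigl\langle A[S]-\sigma_2,\, A'[S]h\bigr\rangle_{L^2(K)} + 2\lambda\langle S,h\rangle_{L^2(\mathbb{R}^2)},
\]
which is the desired Fréchet derivative of $F$.
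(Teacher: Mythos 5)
Your proposal is correct, and it reaches the same formula as the paper but by a genuinely different organization. The paper proves Corollary \ref{cor4} by brute-force expansion: it writes out $F(S+h;\sigma_2)-F(S;\sigma_2)$, substitutes $A[S+h]-A[S]=A'[S]h+o(\|h\|_{H^1(\mathbb{R}^2)})$ from Proposition \ref{prop3}, and then disposes of the quadratic and cross terms by hand; in particular it re-runs the trace-theorem argument of Proposition \ref{prop1} inside this proof to get the pointwise bound $|A'[S]h(\omega)|\le L_4\|S\|_{H^1(\mathbb{R}^2)}\|h\|_{H^1(\mathbb{R}^2)}$, which justifies $(A'[S]h)^2=o(\|h\|_{H^1(\mathbb{R}^2)})$, and finally checks directly that the resulting linear functional is bounded. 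You instead factor $F=F_1+F_2$ with $F_1=N\circ A$, where $N(u)=\|u-\sigma_2\|_{L^2(K)}^2$ is the squared-distance functional on the Hilbert space $L^2(K)$, and invoke the sum and chain rules; the only analytic input beyond Proposition \ref{prop3} is the operator boundedness of $A'[S]\colon H^1(\mathbb{R}^2)\to L^2(K)$, which you correctly upgrade from the pointwise estimate via the finite measure of the compact set $K$ — a step the paper leaves implicit in Proposition \ref{prop3} itself. What your route buys is brevity and modularity: the $o(\cdot)$ bookkeeping inside integrals and the repeated trace estimates disappear into standard Hilbert-space facts. What the paper's route buys is that its intermediate identity \eqref{F_derivative}, with the terms $IV$ and $V$ written as explicit integrals, is exactly the launching point for the subsequent theorem computing $\nabla F$ via the coarea formula; your inner-product form $2\langle A[S]-\sigma_2, A'[S]h\rangle_{L^2(K)}+2\lambda\langle S,h\rangle_{L^2(\mathbb{R}^2)}$ is equivalent, but one would still need to unfold it into integral form before applying that argument.
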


\begin{proof}
Since $A$ is Fr\'echet differentiable from Proposition \ref{prop3},
\begin{align}
    &F(S + h; \sigma_2) - F(S; \sigma_2)\notag\\
    &= \int_K \big\{(A[S + h] - \sigma_2)^2 - (A[S] - \sigma_2)^2\big\} \, d\omega + \lambda \int_{\mathbb{R}^2} \big\{(S + h)^2 - S^2\big\} \, dp dq \notag \\
    &= \int_K \big\{A[S + h] - A[S]\big\}\big\{A[S + h] + A[S] - 2\sigma_2\big\} \, d\omega \notag \\
    &+ 2 \lambda \int_{\mathbb{R}^2} S h \, dp dq + \lambda \int_{\mathbb{R}^2} h^2 \, dp dq\notag\\
    &= \int_K \big\{A'[S] h + o(\|h\|_{H^1(\mathbb{R}^2)})\big\}\big\{A'[S] h + 2A[S] + o(\|h\|_{H^1(\mathbb{R}^2)}) - 2\sigma_2\big\} \, d\omega \notag \\
    &+ 2 \lambda \int_{\mathbb{R}^2} S h \, dp dq + o(\|h\|_{H^1(\mathbb{R}^2)}).\notag
\end{align}
By a similar argument to that of Proposition \ref{prop1}, for any $ \omega \in K \cap(-\sqrt{2gk_0},\sqrt{2gk_0})$,
\begin{align}
    |A'[S]h(\omega)| &\leq \Tilde{C} M \sum_{m_1=\pm1} \sum_{m_2=\pm1}\int_{f_{m_1, m_2}^{-1}(\omega)} |S(m_1 \bm{k_1})| |h(m_2 \bm{k_2})| \, ds\notag\\
    &\leq \Tilde{C} M\sum_{m_1=\pm1} \sum_{m_2=\pm1}\|S(m_1 \bm{k_1})\|_{L^2(\partial\Omega)} \|h(m_2 \bm{k_2})\|_{L^2(\partial\Omega)}\nonumber\\
    &\leq \Tilde{C} M\sum_{m_1=\pm1} \sum_{m_2=\pm1}\Tilde{C_1}\|S(m_1 \bm{k_1})\|_{H^1(\Omega)} \Tilde{C_2} \|h(m_2 \bm{k_2})\|_{H^1(\Omega)}\nonumber\\
    &\leq \Tilde{C} M\sum_{m_1=\pm1} \sum_{m_2=\pm1}\Tilde{C_1}\|S(m_1 \bm{k_1})\|_{H^1(\mathbb{R}^2)} \Tilde{C_2} \|h(m_2 \bm{k_2})\|_{H^1(\mathbb{R}^2)}\nonumber\\
    &\leq L_4\|S\|_{H^1(\mathbb{R}^2)}\|h\|_{H^1(\mathbb{R}^2)},\notag
\end{align}
where $\Tilde{C}=2^7\pi k_0^4,\Tilde{C_1},\Tilde{C_2},M,L_4>0$ are constants. We can apply the same argument for the case in which $ \omega \in K \cap \left(-\sqrt{2gk_0}, \sqrt{2gk_0}\right)^c $. Therefore, noting that $ (A'[S]h)^2 = o(\|h\|_{H^1(\mathbb{R}^2)}) $, 
\begin{align}
\label{F_derivative}
    F(S + h; \sigma_2) - F(S; \sigma_2) &= 2 \int_K A'[S] h (A[S] - \sigma_2) \, d\omega + 2 \lambda \int_{\mathbb{R}^2} S h \, dp dq+o(\|h\|_{H^1(\mathbb{R}^2)})\notag\\
    &=:IV + V +o(\|h\|_{H^1(\mathbb{R}^2)}).
\end{align}
The operator $\Tilde{T}$ is set as $IV + V$. By applying the inequality $
|A'[S]h| \le L_4\|S\|_{H^1(\mathbb{R}^2)}\|h\|_{H^1(\mathbb{R}^2)}$ and by noting that $S \in H^1(\mathbb{R}^2)$ and $A[S],\sigma_2 \in L^2(K)$, we obtain the following.
\begin{align}
    |\Tilde{T}h|&=\bigg|2 \int_K A'[S] h (A[S] - \sigma_2) \, d\omega + 2 \lambda \int_{\mathbb{R}^2} S h \, dp dq\bigg|\notag\\
    &\le2\int_K |A'[S] h (A[S] - \sigma_2)| \, d\omega + 2 \lambda \int_{\mathbb{R}^2} |S h| \, dp dq\notag\\
    &\le2L_4
    \|S\|_{H^1(\mathbb{R}^2)}\|h\|_{H^1(\mathbb{R}^2)}\int_K |A[S] - \sigma_2|\, d\omega+ 2 \lambda\|S\|_{L^2(\mathbb{R}^2)}\|h\|_{L^2(\mathbb{R}^2)}\notag\\&\le2L_4\|S\|_{H^1(\mathbb{R}^2)}\|h\|_{H^1(\mathbb{R}^2)} \cdot L_5\|A[S] - \sigma_2\|_{L^2(\mathbb{R}^2)}+ 2 \lambda\|S\|_{H^1(\mathbb{R}^2)}\|h\|_{H^1(\mathbb{R}^2)}\notag\\
    &=L_6\|h\|_{H^1(\mathbb{R}^2)}\notag.
\end{align}
Here, $L_5, L_6 > 0$ are constants, and we apply H\"older’s inequality in the transformation. Therefore, $\Tilde{T}$ is the bounded linear operator and $F$ is Fr\'echet differentiable on $H^1(\mathbb{R}^2)$.
\end{proof}

\begin{thm}
    The gradient of $F$ at $ S $ is given as
\begin{align}
    \nabla F(S; \sigma_2) &= 2^8 \pi k_0^4 \sum_{m_1 = \pm 1} \sum_{m_2 = \pm 1} \mathbf{1}_{\{(p, q) \in \mathbb{R}^2 \, | \, f_{m_1, m_2}(p, q) \in K\}}(-x-k_0,-y) \notag \\
    &\quad \times\{A[S] \left( f_{m_1, m_2} \left( -x - k_0, - y \right)\right) - \sigma_2 \left( f_{m_1, m_2} \left( -x - k_0, - y \right) \right) \}\notag \\
    &\quad \times S\left( m_1  \left( x - 2 k_0, - y \right) \right) \notag \\
    &\quad \times | \Gamma \left( m_1, m_2, (x - 2k_0, - y), (x, y), f_{m_1, m_2} \left( x - k_0, - y\right) \right) |^2 +2\lambda S(x,y),
\end{align}
where $\mathbf{1}_{\{(p, q) \in \mathbb{R}^2 \, | \, f_{m_1, m_2}(p, q) \in K\}}$ is the indicator function.
\end{thm}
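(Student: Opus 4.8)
The plan is to start from the derivative expansion already established in \eqref{F_derivative}, namely
\[
F(S+h;\sigma_2)-F(S;\sigma_2)=\underbrace{2\int_K A'[S]h\,(A[S]-\sigma_2)\,d\omega}_{IV}+\underbrace{2\lambda\int_{\mathbb{R}^2}Sh\,dp\,dq}_{V}+o(\|h\|_{H^1(\mathbb{R}^2)}),
\]
and to represent the bounded linear functional $h\mapsto IV+V$ as an $L^2(\mathbb{R}^2)$-pairing $\langle\nabla F(S;\sigma_2),h\rangle_{L^2(\mathbb{R}^2)}$; the function multiplying $h(x,y)$ inside this pairing is, by definition, the gradient. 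The term $V$ is already of this form and contributes exactly $2\lambda S(x,y)$, so the whole task reduces to rewriting $IV$ as $\int_{\mathbb{R}^2}(\cdots)\,h(x,y)\,dx\,dy$ with an explicitly identified, $L^2$-integrable coefficient.

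First I would substitute the explicit expression for $A'[S]h$ from Proposition \ref{prop3} into $IV$, producing the prefactor $2\cdot 2^7\pi k_0^4=2^8\pi k_0^4$ together with a double integral over $\omega\in K$ and over the level curves $f_{m_1,m_2}^{-1}(\omega)$. After interchanging the order of integration (Fubini, justified by the $L^2$/trace bounds of Proposition \ref{prop1} and Corollary \ref{cor4}), I would \emph{undo} the elimination of the delta function carried out in Proposition \ref{prop1}: writing
\[
\int_{f_{m_1,m_2}^{-1}(\omega)}\frac{|\Gamma|^2}{|\nabla f_{m_1,m_2}|}S(m_1\bm{k_1})h(m_2\bm{k_2})\,ds=\int_{\mathbb{R}^2}|\Gamma|^2 S(m_1\bm{k_1})h(m_2\bm{k_2})\,\delta(\omega-f_{m_1,m_2}(p,q))\,dp\,dq,
\]
and then integrating against $(A[S]-\sigma_2)(\omega)$ over $K$. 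The $\omega$-integration of the delta collapses to the composition $(A[S]-\sigma_2)\big(f_{m_1,m_2}(p,q)\big)$ restricted by the indicator $\mathbf{1}_{\{f_{m_1,m_2}\in K\}}(p,q)$, and, crucially, the factor $|\nabla f_{m_1,m_2}|^{-1}$ cancels, leaving a clean integral over $(p,q)\in\mathbb{R}^2$.

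The final step is the change of variables that turns $h(m_2\bm{k_2})$ into $h(x,y)$: I would set $(x,y)=m_2\bm{k_2}=m_2(-p-k_0,-q)$, whose Jacobian has absolute value $1$, invert it for each sign choice (for instance $p=-x-k_0$, $q=-y$ when $m_2=+1$), and substitute the resulting expressions for $p$, $q$, $\bm{k_1}$, $f_{m_1,m_2}$, and $\Gamma$. Collecting the coefficient of $h(x,y)$ in the pairing and adding the $2\lambda S(x,y)$ term coming from $V$ then yields the asserted expression for $\nabla F(S;\sigma_2)$, with the indicator and the data residual evaluated at the transformed point $(-x-k_0,-y)$ and the argument of $S$ expressed through $\bm{k_1}$.

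The main obstacle I anticipate is precisely the bookkeeping of this last step: because the substitution $(x,y)=m_2\bm{k_2}$ depends on the sign $m_2$, one must track carefully how the argument of $A[S]-\sigma_2$ and of the indicator, the argument of $S$, and the arguments of $\Gamma$ transform in each of the cases $m_2=\pm1$, and verify that the four summands reassemble into the single summed expression stated in the theorem. Here I expect to use the reflection $(p,q)\mapsto(-p,-q)$, which swaps $\bm{k_1}\leftrightarrow\bm{k_2}$, together with the symmetry of $\Gamma$ in $(\bm{k_1},\bm{k_2})$ already exploited in Proposition \ref{prop3}, to unify the sign cases. By contrast, the analytic content—the application of Fubini and the reversal of the delta-function identity—is routine once the integrability from Proposition \ref{prop1} is invoked, and the $L^2$-integrability of the resulting coefficient follows from the same bound $|A'[S]h|\le L_4\|S\|_{H^1(\mathbb{R}^2)}\|h\|_{H^1(\mathbb{R}^2)}$ used in Corollary \ref{cor4}.
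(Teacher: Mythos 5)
Your proposal is correct and follows essentially the same route as the paper: substitute Proposition \ref{prop3} into $IV$, collapse the double integral $\int_K\int_{f_{m_1,m_2}^{-1}(\omega)}\cdots\,ds\,d\omega$ into a single integral over $\mathbb{R}^2$ (cancelling $|\nabla f_{m_1,m_2}|^{-1}$ and producing the indicator of $\{f_{m_1,m_2}\in K\}$), change variables $m_2\bm{k_2}\mapsto(x,y)$, and read off the gradient via the Riesz representation theorem, adding the $2\lambda S$ term from $V$. The only difference is one of rigor rather than substance: where you ``undo'' the delta-function elimination and invoke Fubini for the $\omega$-integration, the paper performs exactly this step by applying the coarea formula for the Lipschitz function $f_{m_1,m_2}$ (citing Evans--Gariepy and Federer), which is the precise, distribution-free form of your formal delta manipulation.
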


\begin{proof}
Let us consider the term $IV$ in \eqref{F_derivative}. By Proposition \ref{prop3},
\begin{align}
    IV = 2^8 \pi k_0^4\sum_{m_1 = \pm 1} \sum_{m_2 = \pm 1} \int_K \int_{f_{m_1, m_2}^{-1}(\omega)}& 
    \frac{\{A[S](\omega) - \sigma_2(\omega)\} |\Gamma(m_1, m_2, \omega, \bm{k_1}, \bm{k_2})|^2}{|\nabla f_{m_1, m_2}(p, q)|}\notag \\
    &\times S(m_1 \bm{k_1})h(m_2 \bm{k_2}) \, ds \, d\omega.\notag
\end{align}
The function $f_{m_1, m_2}$ is Lipschitz. We define $g(p, q)$ as the integrand of $IV$ with $\omega = f_{m_1, m_2}(p, q)$. Then,
\begin{align}
    g(p,q):=&\frac{\{A[S](f_{m_1, m_2}(p, q)) - \sigma_2(f_{m_1, m_2}(p, q))\} |\Gamma(m_1, m_2, f_{m_1, m_2}(p, q), \bm{k_1}, \bm{k_2})|^2}{|\nabla f_{m_1, m_2}(p, q)|}\notag \\
    &\times S(m_1 \bm{k_1})h(m_2 \bm{k_2}).\notag
\end{align}
Since $g(p, q)$ is Lebesgue integrable on $f_{m_1, m_2}^{-1}(K)$, using the coarea formula (see \citep[Theorem 3.11, p.139]{evans2015measure} and \citep[Theorem 3.2.12, pp.249--250]{federer2014geometric}), we obtain
\begin{align}
    IV &= 2^8 \pi k_0^4\sum_{m_1 = \pm 1} \sum_{m_2 = \pm 1}\int_K \int_{f_{m_1, m_2}^{-1}(\omega)}g(p, q)\, ds \, d\omega \notag\\
    &= 2^8 \pi k_0^4\sum_{m_1 = \pm 1} \sum_{m_2 = \pm 1} \int_{\{(p, q) \in \mathbb{R}^2 \, | \, f_{m_1, m_2}(p, q) \in K\}}g(p, q) |\nabla f_{m_1, m_2}(p, q)|\, dp \, dq\notag \\
    &=2^8 \pi k_0^4 \sum_{m_1 = \pm 1} \sum_{m_2 = \pm 1}\int_{(p, q) \in \mathbb{R}^2} \mathbf{1}_{\{(p, q) \in \mathbb{R}^2 \, | \, f_{m_1, m_2}(p, q) \in K\}}(p,q)\notag\\
    &\phantom{=}\times \big\{A[S] - \sigma_2\big\}|\Gamma|^2S(m_1 \bm{k_1}) h(m_2 \bm{k_2})\, dp \, dq.\notag
\end{align}
By the variable transformation $ m_2 \bm{k}_2 \mapsto (x, y) $ and the term $V$,
\begin{align}
    &F(S + h; \sigma_2) - F(S; \sigma_2) \notag\\
    &= 2 \int_K A'[S] h (A[S] - \sigma_2) \, d\omega + 2 \lambda \int_{\mathbb{R}^2} S h \, dp \, dq+o(\|h\|_{H^1(\mathbb{R}^2)})\notag\\
    &=\int_{(p, q) \in \mathbb{R}^2} \bigg[2^8 \pi k_0^4 \sum_{m_1 = \pm 1} \sum_{m_2 = \pm 1} \mathbf{1}_{\{(p, q) \in \mathbb{R}^2 \, | \, f_{m_1, m_2}(p, q) \in K\}}(-x-k_0,-y) \notag \\
    &\quad \times\{A[S] \left( f_{m_1, m_2} \left( -x - k_0, - y \right)\right) - \sigma_2 \left( f_{m_1, m_2} \left( -x - k_0, - y \right) \right) \}\notag \\
    &\quad \times S\left( m_1  \left( x - 2 k_0, - y \right) \right) \notag \\
    &\quad \times | \Gamma \left( m_1, m_2, (x - 2k_0, - y), (x, y), f_{m_1, m_2} \left( x - k_0, - y\right) \right) |^2 +2\lambda S(x,y)\bigg]h(x,y) \, dx \, dy\nonumber\\
    &\quad +o(\|h\|_{H^1(\mathbb{R}^2)})\notag.
\end{align}
From Proposition \ref{cor4}, we obtain
\begin{align}
    &F'(S; \sigma_2)h=
    \int_{(p, q) \in \mathbb{R}^2} \bigg[2^8 \pi k_0^4 \sum_{m_1 = \pm 1} \sum_{m_2 = \pm 1} \mathbf{1}_{\{(p, q) \in \mathbb{R}^2 \, | \, f_{m_1, m_2}(p, q) \in K\}}(-x-k_0,-y) \notag \\
    &\quad \times\{A[S] \left( f_{m_1, m_2} \left( -x - k_0, - y \right)\right) - \sigma_2 \left( f_{m_1, m_2} \left( -x - k_0, - y \right) \right) \}\notag \\
    &\quad \times S\left( m_1  \left( x - 2 k_0, - y \right) \right) \notag \\
    &\quad \times | \Gamma \left( m_1, m_2, (x - 2k_0, - y), (x, y), f_{m_1, m_2} \left( x - k_0, - y\right) \right) |^2 +2\lambda S(x,y)\bigg]h(x,y) \, dx \, dy\nonumber.
\end{align}
Consequently, using the Riesz representation theorem (as in \citep[Theorem 5.5, p.135]{brezis2010functional}), we can obtain
\begin{align}
    \nabla F(S; \sigma_2) &= 2^8 \pi k_0^4 \sum_{m_1 = \pm 1} \sum_{m_2 = \pm 1} \mathbf{1}_{\{(p, q) \in \mathbb{R}^2 \, | \, f_{m_1, m_2}(p, q) \in K\}}(-x-k_0,-y) \notag \\
    &\quad \times\{A[S] \left( f_{m_1, m_2} \left( -x - k_0, - y \right)\right) - \sigma_2 \left( f_{m_1, m_2} \left( -x - k_0, - y \right) \right) \}\notag \\
    &\quad \times S\left( m_1  \left( x - 2 k_0, - y \right) \right) \notag \\
    &\quad \times | \Gamma \left( m_1, m_2, (x - 2k_0, - y), (x, y), f_{m_1, m_2} \left( x - k_0, - y\right) \right) |^2 +2\lambda S(x,y).\notag
\end{align}
\end{proof}

Since the functional $ F(S; \sigma_2) $ satisfies Assumption \ref{assu1} by Corollary \ref{cor4}, Theorem \ref{the2} is applicable to the objective functional $\Theta(S) = \|A[S] - \sigma_2\|_{L^2(K)}^2 + \lambda \|S\|_{L^2(\mathbb{R}^2)}^2 + \alpha\Phi(S)$. 

Next, we investigate the stability of a functional $\Theta$, which combines Tikhonov regularization with nonnegative sparse regularization. Although the stability of Tikhonov regularization itself is well-established \cite{kirsch1996introduction, clason2021}, the stability analysis of functionals incorporating 
nonnegative sparse regularization has not yet been studied sufficiently. To address this gap, we therefore perform our analysis in a general setting.

Letting $X, Y$ be Hilbert spaces and letting $F$ be an operator from $X$ to $Y$, we discuss the stability of the functional
\[
\Theta(u) = \|F(u) - y^\delta\|_Y^2 + \lambda \|u\|_X^2 + \alpha\Phi(u),
\]
where $u \in X$. In the functional, $y^\delta \in Y$ denotes the observed data with noise of level $\delta$, i.e., it satisfies $\|y^\delta - y\|_Y \le \delta$, where $y$ represents the true data.
\begin{prop}
We assume that $F$ is Fr\'echet differentiable. Additionally, the following assumptions hold:
\begin{enumerate}
    \item\label{a1} $ F $ has a nonnegative minimum norm solution $u^\dagger$ such that there exists a constant $R > 0$ satisfying $\Phi(u^\dagger) < R$.
    \item\label{a2} $F'$ is Lipschitz continuous with a Lipschitz constant $L$.
    \item\label{a3} There exists $w \in Y$ such that $u^\dagger = F'(u^\dagger)^* w$ and $L \|w\|_Y < 1$.
\end{enumerate}
Let $c_1, c_2, c_3, c_4 > 0$ be constants satisfying $c_1 \sqrt{\delta} < \lambda < c_2 \sqrt{\delta}$ and $c_3 \delta < \alpha < c_4 \delta$. Then, the following stability estimate holds as 
\[
\|u^{\delta}_{\lambda,\alpha} - u^\dagger\|_X \leq c\delta^{\frac{1}{4},}
\]
for all $0 < \delta < 1$, where $u_{\lambda, \alpha}^\delta$ is the global minimizer of the functional $\Theta$, and $c > 0$ is a constant.
\end{prop}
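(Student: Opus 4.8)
The plan is to run the classical variational convergence-rate argument for Tikhonov-type regularization, adapted to the presence of the extra sparsity penalty $\alpha\Phi$. Write $e := \|u_{\lambda,\alpha}^\delta - u^\dagger\|_X$ and $\rho := \|F(u_{\lambda,\alpha}^\delta) - y^\delta\|_Y$. First I would exploit minimality: since $u_{\lambda,\alpha}^\delta$ is a global minimizer, $\Theta(u_{\lambda,\alpha}^\delta) \le \Theta(u^\dagger)$. Because $u^\dagger$ solves $F(u^\dagger) = y$ and $\|y - y^\delta\|_Y \le \delta$, the data term on the right is at most $\delta^2$. Expanding $\lambda\|u_{\lambda,\alpha}^\delta\|_X^2 = \lambda e^2 + 2\lambda\langle u_{\lambda,\alpha}^\delta - u^\dagger, u^\dagger\rangle_X + \lambda\|u^\dagger\|_X^2$ and cancelling the common $\lambda\|u^\dagger\|_X^2$ yields
\[
\rho^2 + \lambda e^2 + \alpha\Phi(u_{\lambda,\alpha}^\delta) \le \delta^2 + \alpha\Phi(u^\dagger) - 2\lambda\langle u_{\lambda,\alpha}^\delta - u^\dagger, u^\dagger\rangle_X.
\]

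The central step is to control the cross term using the source condition (\ref{a3}) and the Lipschitz bound (\ref{a2}). Writing $u^\dagger = F'(u^\dagger)^* w$ and moving the adjoint across gives $\langle u_{\lambda,\alpha}^\delta - u^\dagger, u^\dagger\rangle_X = \langle F'(u^\dagger)(u_{\lambda,\alpha}^\delta - u^\dagger), w\rangle_Y$. I would then replace the linearization $F'(u^\dagger)(u_{\lambda,\alpha}^\delta - u^\dagger)$ by the true increment $F(u_{\lambda,\alpha}^\delta) - F(u^\dagger)$, paying the quadratic Taylor remainder that the Lipschitz continuity of $F'$ controls, namely $\|F(u_{\lambda,\alpha}^\delta) - F(u^\dagger) - F'(u^\dagger)(u_{\lambda,\alpha}^\delta - u^\dagger)\|_Y \le \tfrac{L}{2}e^2$. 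Combining this with $F(u^\dagger) = y$ and $\|F(u_{\lambda,\alpha}^\delta) - y\|_Y \le \rho + \delta$ bounds the cross term by $2\lambda\|w\|_Y(\rho + \delta) + L\lambda\|w\|_Y e^2$.

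Substituting this estimate, dropping the nonnegative term $\alpha\Phi(u_{\lambda,\alpha}^\delta) \ge 0$ (the minimizer is nonnegative by Theorem \ref{the2}), and using $\Phi(u^\dagger) < R$ from (\ref{a1}) gives
\[
\rho^2 + \lambda(1 - L\|w\|_Y) e^2 \le \delta^2 + \alpha R + 2\lambda\|w\|_Y\rho + 2\lambda\|w\|_Y\delta.
\]
Here the smallness hypothesis $L\|w\|_Y < 1$ is exactly what makes the coefficient $\kappa := 1 - L\|w\|_Y$ of $e^2$ strictly positive, so that the quadratic remainder can be absorbed into the left-hand side; this absorption is the main obstacle, and it is the sole reason assumption (\ref{a3}) carries the strict inequality rather than merely asserting a source representation. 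The remaining $\rho$-term is removed by completing the square, $\rho^2 - 2\lambda\|w\|_Y\rho = (\rho - \lambda\|w\|_Y)^2 - \lambda^2\|w\|_Y^2 \ge -\lambda^2\|w\|_Y^2$, leaving
\[
\kappa\, e^2 \le \frac{\delta^2}{\lambda} + \frac{\alpha R}{\lambda} + \lambda\|w\|_Y^2 + 2\|w\|_Y\delta.
\]

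Finally I would insert the parameter windows $c_1\sqrt{\delta} < \lambda < c_2\sqrt{\delta}$ and $c_3\delta < \alpha < c_4\delta$ and track orders in $\delta$: the four terms are $O(\delta^{3/2})$, $O(\sqrt{\delta})$, $O(\sqrt{\delta})$ and $O(\delta)$ respectively, so for $0 < \delta < 1$ each is dominated by a constant multiple of $\sqrt{\delta}$. Hence $e^2 \le C\sqrt{\delta}$ for a constant $C$ depending only on $R$, $\|w\|_Y$, $\kappa$ and the $c_i$, which gives $\|u_{\lambda,\alpha}^\delta - u^\dagger\|_X \le c\,\delta^{1/4}$ with $c = \sqrt{C}$, as claimed. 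The calibration $\lambda \sim \sqrt{\delta}$, $\alpha \sim \delta$ is precisely what equalizes the leading $\sqrt{\delta}$ contributions of the penalty and the source-condition terms; any other scaling would degrade the exponent.
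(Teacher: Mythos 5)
Your proposal is correct and follows essentially the same route as the paper's proof: minimality of $u^{\delta}_{\lambda,\alpha}$, expansion of the quadratic penalty to isolate the cross term, the source condition together with the Lipschitz--Taylor remainder bound $\|F(u^{\delta}_{\lambda,\alpha}) - F(u^\dagger) - F'(u^\dagger)(u^{\delta}_{\lambda,\alpha} - u^\dagger)\|_Y \leq \tfrac{L}{2}\|u^{\delta}_{\lambda,\alpha} - u^\dagger\|_X^2$, absorption of the quadratic remainder using $1 - L\|w\|_Y > 0$, completing the square in the residual, and finally the calibration $\lambda \sim \sqrt{\delta}$, $\alpha \sim \delta$. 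The only cosmetic difference is that you discard $\alpha\Phi(u^{\delta}_{\lambda,\alpha}) \geq 0$ outright (which implicitly uses the same inequality $\|u\|_X \leq \Phi(u)$ from \cite{Muoi2018} that guarantees $\Phi$ is nonnegative), whereas the paper keeps a lower bound $\alpha\|u^{\delta}_{\lambda,\alpha} - u^\dagger\|_X - \alpha\|u^\dagger\|_X$ via that lemma and drops it at the end, which only changes the constant ($\alpha R$ versus $2\alpha R$).
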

\begin{proof}
    
Since the functional $\Theta(u)$ admits $u^{\delta}_{\lambda,\alpha}$ as its global minimizer (Theorem \ref{the2}), 
\begin{align*}
    \Theta(u^{\delta}_{\lambda,\alpha}) 
    &= \|F(u^{\delta}_{\lambda,\alpha}) - y^\delta\|_Y^2  + \lambda \|u^{\delta}_{\lambda,\alpha}\|_X^2  + \alpha \Phi(u^{\delta}_{\lambda,\alpha})\notag\\
    &\leq \|F(u^\dagger) - y^\delta\|_Y^2 + \lambda \|u^\dagger\|_X^2 + \alpha \Phi(u^\dagger) = \Theta(u^\dagger).\notag
\end{align*}
Using the inequality $\|u\|_X \leq \Phi(u)$ proven in \citep[Lemma 2.1, p.3]{Muoi2018}, we obtain
\begin{align*}
    &\|F(u^{\delta}_{\lambda,\alpha}) - y^\delta\|_Y^2  + \lambda \|u^{\delta}_{\lambda,\alpha}\|_X^2  + \alpha \|u^{\delta}_{\lambda,\alpha}\|_X \\
    \leq &\|F(u^{\delta}_{\lambda,\alpha}) - y^\delta\|_Y^2  + \lambda \|u^{\delta}_{\lambda,\alpha}\|_X^2  + \alpha \Phi(u^{\delta}_{\lambda,\alpha}) = \Theta(u^{\delta}_{\lambda,\alpha}).\notag
\end{align*}
Thus, 
\begin{align}
    &\|F(u^{\delta}_{\lambda,\alpha}) - y^\delta\|_Y^2  + \lambda \|u^{\delta}_{\lambda,\alpha}\|_X^2  + \alpha \|u^{\delta}_{\lambda,\alpha}\|_X \leq \Theta(u^{\delta}_{\lambda,\alpha})\leq \Theta(u^\dagger).\notag
\end{align}
Therefore, we have 
\begin{align}
    \label{pr61}
    \|F(u^{\delta}_{\lambda,\alpha}) - y^\delta\|_Y^2  + \lambda \|u^{\delta}_{\lambda,\alpha}\|_X^2  + \alpha \|u^{\delta}_{\lambda,\alpha}\|_X \leq \|F(u^\dagger) - y^\delta\|_Y^2 + \lambda \|u^\dagger\|_X^2 + \alpha \Phi(u^\dagger).
\end{align}
From
\begin{align}
    \|u^{\delta}_{\lambda,\alpha}\|_X^2 &= \|(u^{\delta}_{\lambda,\alpha}-u^\dagger) + u^\dagger\|^2_X\notag\\
    &=\|u^{\delta}_{\lambda,\alpha} -  u^\dagger\|_X^2 + 2\langle u^\dagger,u^{\delta}_{\lambda,\alpha} -  u^\dagger \rangle_X+\|u^\dagger\|_X^2\notag
\end{align}
and the triangle inequality
\begin{align}
    \|u^{\delta}_{\lambda,\alpha}\|_X \ge\|u^{\delta}_{\lambda,\alpha} -  u^\dagger\|_X-\|u^\dagger\|_X,\notag
\end{align}
we can transform the inequality \eqref{pr61} as
\begin{align}\label{pr62}
    &\|F(u^{\delta}_{\lambda,\alpha}) - y^\delta\|_Y^2  + \lambda\|u^{\delta}_{\lambda,\alpha}-u^\dagger\|_X^2+2\lambda\langle u^\dagger,u^{\delta}_{\lambda,\alpha} -  u^\dagger \rangle_X 
    + \lambda\|u^\dagger\|^2_X 
    + \alpha\|u^{\delta}_{\lambda,\alpha}-u^\dagger\|_X - \alpha\|u^\dagger\|_X\\
    &\leq \|F(u^\dagger) - y^\delta\|_Y^2 + \lambda \|u^\dagger\|_X^2 + \alpha \Phi(u^\dagger).\notag
\end{align}
Since the assumption \eqref{a1} implies $F(u^\dagger) = y$ and we have $\|y - y^\delta\|_Y \leq \delta$, it follows that $\|F(u^\dagger) - y^\delta\|_Y \leq \delta$. Thus, the right-hand side of \eqref{pr62} represents
\begin{align}\label{pr63}
    \|F(u^\dagger) - y^\delta\|_Y^2 + \lambda \|u^\dagger\|_X^2 + \alpha \Phi(u^\dagger)
    &\leq \delta^2+\lambda\|u^\dagger\|_X^2 + \alpha\Phi(u^\dagger).
\end{align}
In addition, from assumption \eqref{a1}, there exists a constant $R>0$ such that $\Phi(u^\dagger) < R$. Then, from \eqref{pr63}, 
\begin{align}\label{pr64}
    \|F(u^\dagger) - y^\delta\|_Y^2 + \lambda \|u^\dagger\|_X^2 + \alpha \Phi(u^\dagger)
    &\leq \delta^2+\lambda\|u^\dagger\|_X^2 + \alpha R. 
\end{align}
Combining \eqref{pr62} and \eqref{pr64},
\begin{align*}
    &\|F(u^{\delta}_{\lambda,\alpha}) - y^\delta\|_Y^2  + \lambda\|u^{\delta}_{\lambda,\alpha}-u^\dagger\|_X^2+2\lambda\langle u^\dagger,u^{\delta}_{\lambda,\alpha} -  u^\dagger \rangle_X 
    + \lambda\|u^\dagger\|^2_X 
    + \alpha\|u^{\delta}_{\lambda,\alpha}-u^\dagger\|_X - \alpha\|u^\dagger\|_X \\
    &\leq \delta^2+\lambda\|u^\dagger\|_X^2 + \alpha R.
\end{align*}
Therefore, we can obtain the following estimate of
\begin{align}
\label{pr65}
    &\|F(u^{\delta}_{\lambda,\alpha}) - y^\delta\|_Y^2  + \lambda\|u^{\delta}_{\lambda,\alpha}-u^\dagger\|_X^2 +\alpha\|u^{\delta}_{\lambda,\alpha}-u^\dagger\|_X\notag\\
    &\leq \delta^2 + \alpha R - 2\lambda\langle u^\dagger,u^{\delta}_{\lambda,\alpha} -  u^\dagger \rangle_X + \alpha \|u^\dagger\|_X \notag \\
    &< \delta^2 + 2\alpha R - 2\lambda\langle u^\dagger,u^{\delta}_{\lambda,\alpha} -  u^\dagger \rangle_X, 
\end{align}
for which we used the inequality $\|u^\dagger\|_X \leq \Phi(u^\dagger) < R$ and eliminated the term $\lambda\|u^\dagger\|^2_X$ from both sides of \eqref{pr65}. By the assumption \eqref{a3}, the right-hand side of \eqref{pr65} can be expressed as
\begin{align*}
    &\delta^2 + 2\alpha R -2\lambda\langle u^\dagger,u^{\delta}_{\lambda,\alpha} -  u^\dagger \rangle_X\notag\\
    &= \delta^2 + 2\alpha R -2\lambda\langle F'(u^\dagger)^*w,u^{\delta}_{\lambda,\alpha} -  u^\dagger \rangle_X\notag\\
    &= \delta^2 + 2\alpha R -2\lambda\langle w,F'(u^\dagger)(u^{\delta}_{\lambda,\alpha} -  u^\dagger) \rangle_Y\notag\\
    &\leq \delta^2 + 2\alpha R +2\lambda\|w\|_Y\|F'(u^\dagger)(u^{\delta}_{\lambda,\alpha} -  u^\dagger) \|_Y.
\end{align*}
Combining \eqref{pr65} and \eqref{pr66}, we have
\begin{align}\label{pr66}
    &\|F(u^{\delta}_{\lambda,\alpha}) - y^\delta\|_Y^2  + \lambda\|u^{\delta}_{\lambda,\alpha}-u^\dagger\|_X^2 +\alpha\|u^{\delta}_{\lambda,\alpha}-u^\dagger\|_X\notag\\
    &\leq \delta^2 + 2\alpha R +2\lambda\|w\|_Y\|F'(u^\dagger)(u^{\delta}_{\lambda,\alpha} -  u^\dagger) \|_Y. 
\end{align}
From assumption \eqref{a2} and the inequality (see \citep[Lemma 9.5, p.85]{clason2021}), 
\begin{align*}
    \|F(u^{\delta}_{\lambda,\alpha}) - F(u^\dagger) - F'(u^\dagger)(u^{\delta}_{\lambda,\alpha} - u^\dagger)\|_Y \leq \frac{1}{2}L\|u^{\delta}_{\lambda,\alpha} - u^\dagger\|_X^2. 
\end{align*}
Then using the triangle inequality, we have 
\begin{align*}
     \|F'(u^\dagger)(u^{\delta}_{\lambda,\alpha} - u^\dagger)\|_Y - \|F(u^{\delta}_{\lambda,\alpha}) - F(u^\dagger)\|_Y \leq \frac{1}{2}L\|u^{\delta}_{\lambda,\alpha} - u^\dagger\|_X^2.  
\end{align*}
Subsequently, we obtain 
\begin{align*}
     \|F'(u^\dagger)(u^{\delta}_{\lambda,\alpha} - u^\dagger)\|_Y  &\leq \frac{1}{2}L\|u^{\delta}_{\lambda,\alpha} - u^\dagger\|_X^2 + \|F(u^{\delta}_{\lambda,\alpha}) - F(u^\dagger)\|_Y \\
     &=\frac{1}{2}L\|u^{\delta}_{\lambda,\alpha} - u^\dagger\|_X^2 + \|F(u^{\delta}_{\lambda,\alpha}) - y^\delta + y^\delta -F(u^\dagger)\|_Y \\
     &\leq\frac{1}{2}L\|u^{\delta}_{\lambda,\alpha} - u^\dagger\|_X^2 + \|F(u^{\delta}_{\lambda,\alpha}) - y^\delta\|_Y + \|y^\delta -F(u^\dagger)\|_Y \\
     &\leq\frac{1}{2}L\|u^{\delta}_{\lambda,\alpha} - u^\dagger\|_X^2 + \|F(u^{\delta}_{\lambda,\alpha}) - y^\delta\|_Y + \delta. 
\end{align*}
Thus,
\begin{align}\label{pr67}
    \|F'(u^{\delta}_{\lambda,\alpha} - u^\dagger)\|_Y\leq\frac{1}{2}L\|u^{\delta}_{\lambda,\alpha} - u^\dagger\|_X^2 + \|F(u^{\delta}_{\lambda,\alpha})- y^\delta\|_Y + \delta.
\end{align}
Substituting \eqref{pr67} into \eqref{pr66} gives
\begin{align*}
    &\|F(u^{\delta}_{\lambda,\alpha}) - y^\delta\|_Y^2  + \lambda\|u^{\delta}_{\lambda,\alpha}-u^\dagger\|_X^2 +\alpha\|u^{\delta}_{\lambda,\alpha}-u^\dagger\|_X\notag\\
    &\leq \delta^2 + 2\alpha R +\lambda L \|w\|_Y\|u^{\delta}_{\lambda,\alpha}-u^\dagger\|_X^2 +2\lambda\|w\|_Y\|F(u^{\delta}_{\lambda,\alpha}) - y^\delta\|_Y +2\delta \lambda\|w\|_Y.
\end{align*}
By adding $ \lambda^2\|w\|_Y^2 $ to both sides and then transforming the equation, we obtain
\begin{align*}
    \big[&\|F(u^{\delta}_{\lambda, \alpha}) - y^\delta\|^2_Y - 2\lambda\|w\|_Y\|F(u^{\delta}_{\lambda,\alpha}) - y^\delta\|_Y + \lambda^2 \|w\|^2_Y \big]
    + \lambda \|u^{\delta}_{\lambda, \alpha} - u^\dagger\|_X^2 + \alpha \|u^{\delta}_{\lambda, \alpha} - u^\dagger\|_X \notag \\
    &\leq 2\alpha R + \lambda L\|u^{\delta}_{\lambda, \alpha} - u^\dagger\|_X^2 \|w\|_Y + (\lambda^2 \|w\|^2_Y + 2\delta \lambda\|w\|_Y + \delta^2).
\end{align*}
Thus,
\begin{align}
    \big[\|F(u^{\delta}_{\lambda, \alpha}) - y^\delta\|_Y - \lambda \|w\|_Y \big]^2 
    &+ \lambda \|u^{\delta}_{\lambda, \alpha} - u^\dagger\|_X^2 + \alpha \|u^{\delta}_{\lambda, \alpha} - u^\dagger\|_X \notag \\
    &\leq 2\alpha R + \lambda L\|u^{\delta}_{\lambda, \alpha} - u^\dagger\|_X^2 \|w\|_Y + (\lambda \|w\|_Y + \delta)^2.\notag
\end{align}
By omitting the terms $\big[\|F(u^{\delta}_{\lambda, \alpha}) - y^\delta\|_Y - \lambda \|w\|_Y \big]^2$ and $\alpha \|u^{\delta}_{\lambda, \alpha} - u^\dagger\|_X$, 
we can produce 
\begin{align*}
    \lambda (1 - L \|w\|_Y) \|u^{\delta}_{\lambda, \alpha} - u^\dagger\|_X^2 \leq 
    2\alpha R + (\lambda \|w\|_Y + \delta)^2.
\end{align*}
Thus, 
\begin{align*}
    \|u^{\delta}_{\lambda, \alpha} - u^\dagger\|_X \leq \sqrt{\frac{2\alpha R + (\lambda \|w\|_Y + \delta)^2}{\lambda (1 - L \|w\|_Y)}}.
\end{align*}
From the assumptions $ c_1 \sqrt{\delta} < \lambda < c_2\sqrt{\delta} $ and $ c_3 \delta < \alpha < c_4 \delta $, with $0 < \delta < 1$, the following result is obtained: 
\begin{align*}
    \|u^{\delta}_{\lambda, \alpha} - u^\dagger\|_X &\leq \sqrt{\frac{2c_4 \delta R + (c_2 \sqrt{\delta} \|w\|_Y + \delta)^2}{c_1 \sqrt{\delta} (1 - L \|w\|_Y)}}\\
    &\leq \sqrt{\frac{2c_4 \delta R + (c_2 \sqrt{\delta} \|w\|_Y + \sqrt{\delta})^2}{c_1 \sqrt{\delta} (1 - L \|w\|_Y)}}\\
    &=\sqrt{\frac{2c_4 R + (c_2 \|w\|_Y + 1)^2}{c_1  (1 - L \|w\|_Y)}}\delta^\frac{1}{4}.
\end{align*}
Therefore, there exists a constant $c > 0$ such that 
\begin{equation*}
    \|u^{\delta}_{\lambda, \alpha} - u^\dagger\|_X \leq c \delta^{\frac{1}{4}}, 
\end{equation*}
which provides the desired estimate. 
\end{proof}

\section{Numerical experiments}\label{Numerical experiment}
As described in this section, using results from Section \ref{Preliminaries} and Section \ref{Numerical experiment}, we develop an algorithm for ocean spectrum estimation and than validate it. Furthermore, we perform wave height estimation using real data.

\subsection{Model wave spectrum and algorithm}
For numerical experiments, we use the ocean wave spectrum model based on \cite{kataoka2016}. The ocean wave directional spectrum $\hat{S}(f,\theta)$ is represented as the product of the frequency spectrum $K(f)$ and the angular distribution function $G(f,\theta)$, i.e., $\hat{S}(f,\theta) = K(f) G(f,\theta)$. The frequency spectrum $K(f)$ is based on the observational model proposed by Mitsuyasu \cite{Mitsuyasu1970}, defined as 
\[
K(f) = 0.257 H_s^2 T_{1/3}(T_{1/3} f)^{-5} \exp\left(-1.03 (T_{1/3} f)^{-4}\right).
\]
The directional spreading function $G(f,\theta)$ follows \cite{Mitsuyasu1975}, given as
\[
G(f,\theta) = G_0 \cos^{2s(f)}\left(\phi \left(\frac{\theta - \theta_0}{2}\right)\right).
\]
In the equations presented above, $ H_s $ and $ T_s $ respectively represent the significant wave height and the significant wave period. In addition, the angle $ \theta_0 $ represents the dominant wave direction.
The function $ \phi $ is expressed as 
\[
\phi(\theta) =
\begin{cases}
\theta + \pi, & \text{if } \theta < -\pi / 2, \\
\theta - \pi, & \text{if } \theta > \pi / 2, \\
\theta, & \text{otherwise},
\end{cases}
\]
where $-\pi < \theta \le \pi$ denotes the direction of the component waves. The exponent $ s(f) $ in the cosine function is a frequency-dependent directional spreading parameter dependent on the frequency $f$. Using $s_{\text{max}}$ and the peak frequency $ f_p = (1.05 T_s)^{-1} $, the exponent $s(f)$ is expressed as
\[
s(f) =
\begin{cases}
s_{\text{max}} \left( \frac{f}{f_p} \right)^5, & \text{if } f \leq f_p, \\
s_{\text{max}} \left( \frac{f}{f_p} \right)^{-2.5}, & \text{if } f > f_p.
\end{cases}
\]
In addition, the term $G_0$ is the normalization parameter expressed as
\[
G_0 = \left[ \int_{-\pi}^\pi \cos^{2s(f)} \left( \phi \left( \frac{\theta - \theta_0}{2} \right) \right) d\theta \right]^{-1}.
\]

We convert the ocean wave directional spectrum $ \hat{S}(f, \theta) = K(f) G(f, \theta) $ to the wave spectrum using the dispersion relation. The transformation is given as
\[
S(p, q) = \frac{1}{k} \frac{\partial \omega}{\partial k} \hat{S}(\omega, \theta),
\]
where $ \omega $ represents the angular frequency of the component waves, and $ k $ denotes the wavenumber \cite{kataoka2016}. We use this transformed spectrum $S(p,q)$ for numerical experimentation.
\begin{algorithm}
    \caption{Wave spectrum reconstruction algorithm}
    \label{algorithm1}
    \begin{algorithmic}[1]
        \State Set the initial wave spectrum $ S_0 $ such that $ \Phi(S_0) < +\infty $.
        \For{$n = 0,1,2,\dots$}
            \State Compute $S_{n+1} = \mathbb{P}_{t\alpha}(S_n - t\nabla F(S_n; \sigma_2))$, where $t$ is the step size.
            \While{$\Theta(S_{n+1}) > \Theta(S_n)$}
                \State Update $t \gets t \times \mu$, where $\mu \in (0, 1)$ is a constant to adjust the step size.
                \State Recompute $S_{n+1} = \mathbb{P}_{t\alpha}(S_n - t\nabla F(S_n; \sigma_2))$.
            \EndWhile
            \State Update $S_n \gets S_{n+1}$.
            \If{$\Theta(S_n)$ converges to a specific value or  $|\Theta(S_{n+1}) -\Theta(S_n)|$ becomes smaller than a certain threshold,}
                \State Terminate the algorithm.
            \EndIf
        \EndFor
    \end{algorithmic}
\end{algorithm}

We construct Algorithm \ref{algorithm1} based on the gradient method, referring to the algorithm proposed by \cite{Muoi2018}. We set the initial value $S_0$ as the wave spectrum generated by the model. Although we employ a discretized wave spectrum for the related computations, the algorithm requires spectral values at points that do not coincide with the computational grid. Therefore, in addition to the discretized spectrum, we use its interpolated version, obtained via spline interpolation, to provide these intermediate values. Since the wave spectrum asymptotically approaches zero at sufficiently distant regions, we assume that its values vanish outside the computational domain for numerical purposes.

\subsection{Validation of the algorithm}
For numerical experiments, we use the model wave spectrum generated using the method described above. This model spectrum is regarded as the true data. The second-order Doppler spectrum computed from \eqref{main} serves as the observed data. Additionally, perturbations at various levels, generated using uniform random numbers, are added to the true wave spectrum. The resulting perturbed spectra are then used as initial values for the algorithm. Through these numerical experiments, we evaluate how accurately the true wave spectrum can be reconstructed. To assess the accuracy, we consider the relative errors (error rates) of the wave spectrum and the significant wave height. The error rate of the wave spectrum is defined as
\begin{equation*}
\frac{\| S_{\text{true}} - S \|_{L^2(\mathbb{R}^2)}}{\| S_{\text{true}} \|_{L^2(\mathbb{R}^2)}} \times 100,
\end{equation*}
where $S_{\text{true}}$ represents the true wave spectrum, and where $S$ represents the wave spectrum under consideration. Similarly, the error rate of the significant wave height is expressed as
\begin{equation*}
\frac{| h_{\text{true}} - h |}{| h_{\text{true}} |} \times 100,
\end{equation*}
where $h_{\text{true}}$ represents the significant wave height computed from the true wave spectrum, and where $h$ denotes that computed from the wave spectrum $S$. For this study, we present the results obtained with a 40\% perturbation added to the true wave spectrum. The parameters are set as $\lambda = 10^{-3}$ and $\alpha = 10^{-6}$.

\begin{figure}[h]
    \centering

    \begin{minipage}{0.45\textwidth} 
        \centering
        \includegraphics[width=0.85\linewidth]{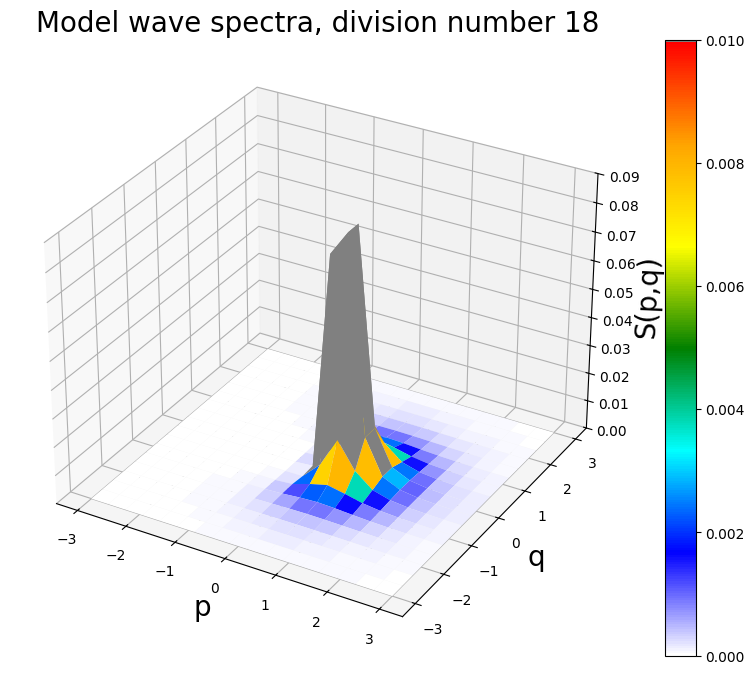}
        \subcaption*{($A$) True wave spectrum}
        \label{true}
    \end{minipage}
    \hfill
    \begin{minipage}{0.45\textwidth}  
        \centering
        \includegraphics[width=0.9\linewidth]{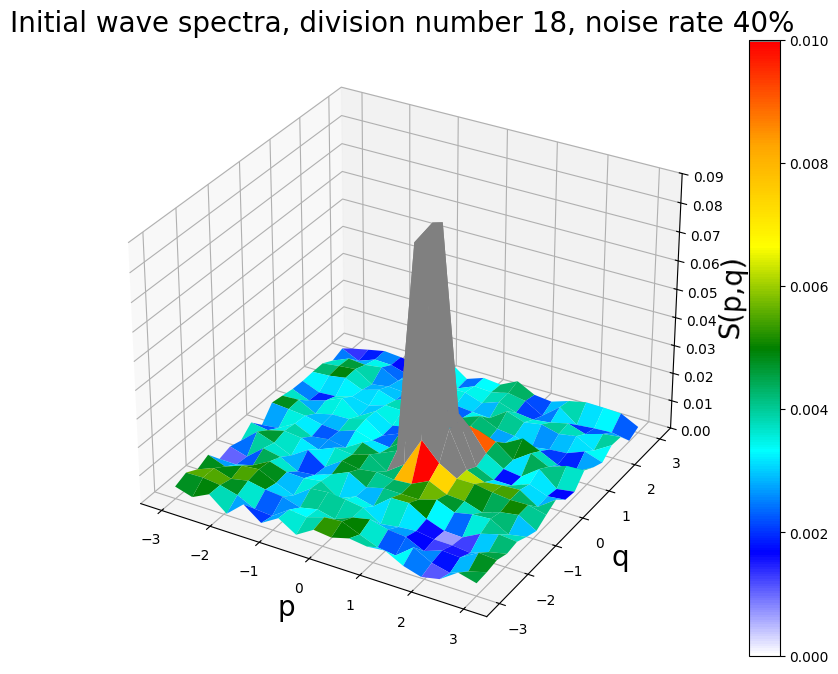}
        \subcaption*{($B$) Wave spectrum with perturbation}
        \label{noise}
    \end{minipage}

    \vspace{0.01\textwidth}

    \begin{minipage}{0.45\textwidth}
        \centering
        \includegraphics[width=0.9\linewidth]{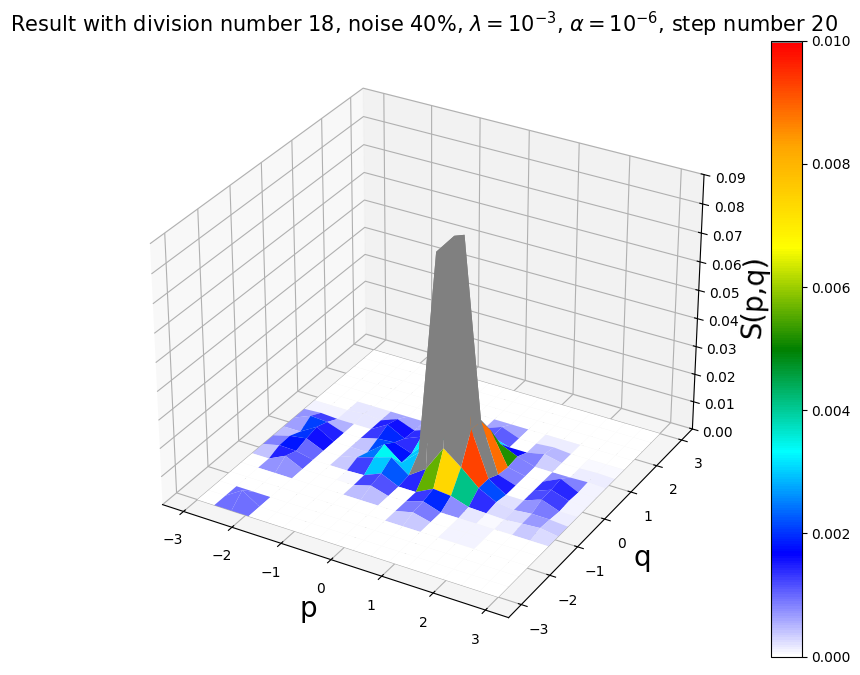}
        \subcaption*{($C$) Reconstructed wave spectrum}
        \label{result}
    \end{minipage}
    \hspace{0.04\textwidth}
    \begin{minipage}{0.49\textwidth}
        \centering
        \includegraphics[width=0.92\linewidth]{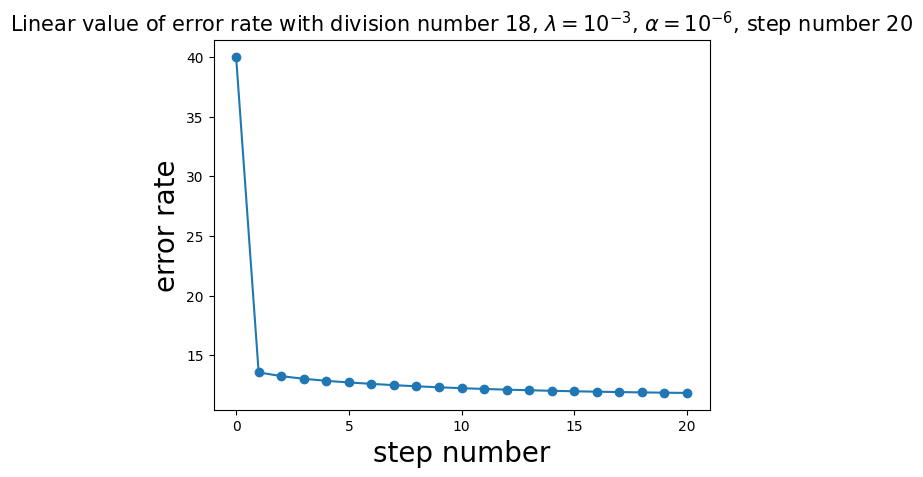}
        \subcaption*{($D$) Transition of wave spectrum error rate}
        \label{error_rate}
    \end{minipage}

    \vspace{0.02\textwidth}

    \begin{minipage}{0.48\textwidth}
        \centering
        \vspace{0.05\textwidth}
        \includegraphics[width=0.96\linewidth]{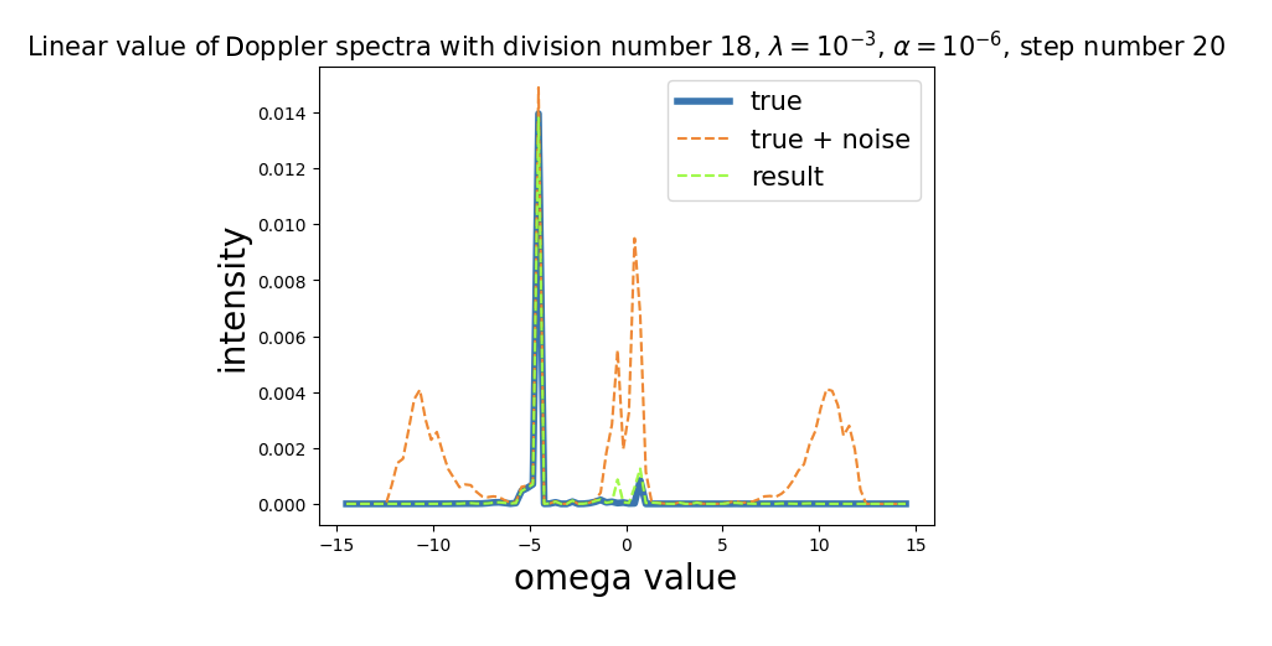}
        \subcaption*{($E$) Comparison of second-order Doppler spectra}
        \label{2dop}
    \end{minipage}
    \begin{minipage}{0.45\textwidth}
        \centering
        \includegraphics[width=1.05\linewidth]{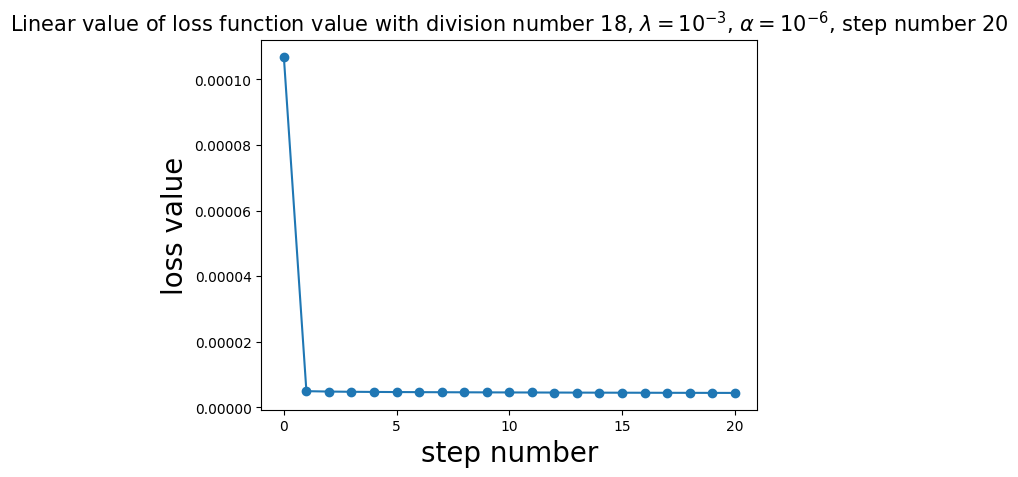}
        \subcaption*{($F$) Transition of the objective function}
        \label{loss}
    \end{minipage}
    
    \caption{Experimentally obtained results with 40\% relative error perturbation added.}
    \label{fig:four_images}
\end{figure}
\noindent

Figure \ref{fig:four_images}($A$) shows the true wave spectrum. Figure \ref{fig:four_images}($B$) displays the initial wave spectrum with perturbation. The true wave spectrum is generated with a significant wave height of 1 m, a significant wave period of 3 s, and a dominant wave direction of 0 rad, meaning that the waves travel along the ocean radar's line of sight. The wave spectrum coloration uses a value range of 0--0.01, where value 0 is shown in white. Areas with values exceeding 0.01 are shown as gray because no significant changes occur before and after the experiment.

Results obtained after 20 iterations of the algorithm are also presented in Figure \ref{fig:four_images}. Figure \ref{fig:four_images}($C$) portrays the reconstructed wave spectrum. It is readily apparent that the perturbation around the significant peaks has been reduced considerably, resulting in a shape closer to the true data. Figure \ref{fig:four_images}($D$) depicts the transition of the relative error rate, showing that although 40\% perturbation was initially added, the final error rate decreases to approximately 12\%. Figure \ref{fig:four_images}($E$) presents comparison of the second-order Doppler spectra derived from the true data (blue solid line), the perturbed spectrum (orange dashed line), and the reconstructed wave spectrum (green dashed line). It is evident that the Doppler spectra derived from the original and reconstructed spectra are almost identical. Additionally, Figure \ref{fig:four_images}($F$) shows that the value of the objective function converges to a certain constant.

\begin{figure}[t]
    \centering

    \begin{minipage}{0.45\textwidth} 
        \centering
        \includegraphics[width=\linewidth]{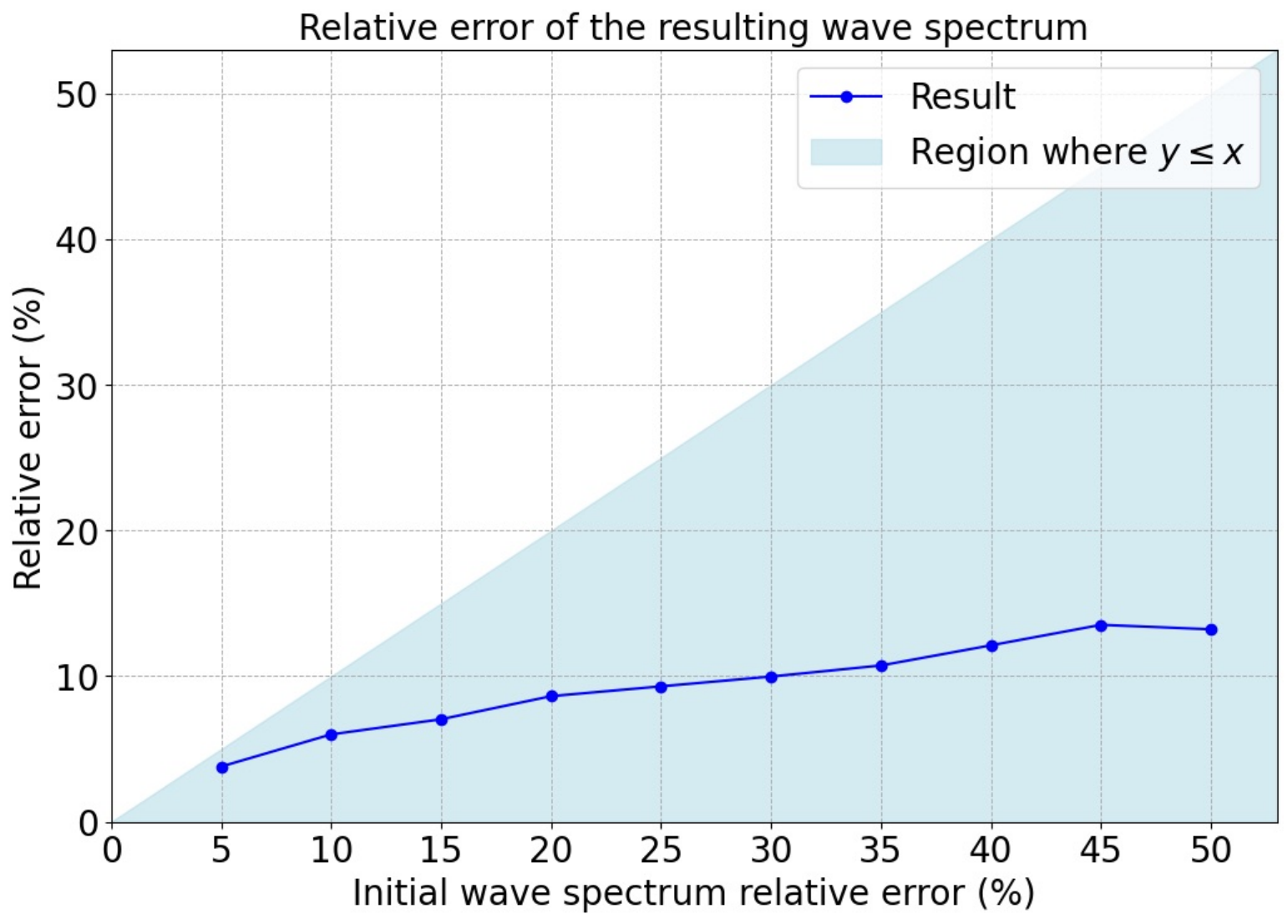} 
        \subcaption{Relative error of reconstructed wave spectrum}
        \label{fig:first_image}
    \end{minipage}
    \hspace{0.05\textwidth}
    \begin{minipage}{0.45\textwidth} 
        \centering
        \includegraphics[width=\linewidth]{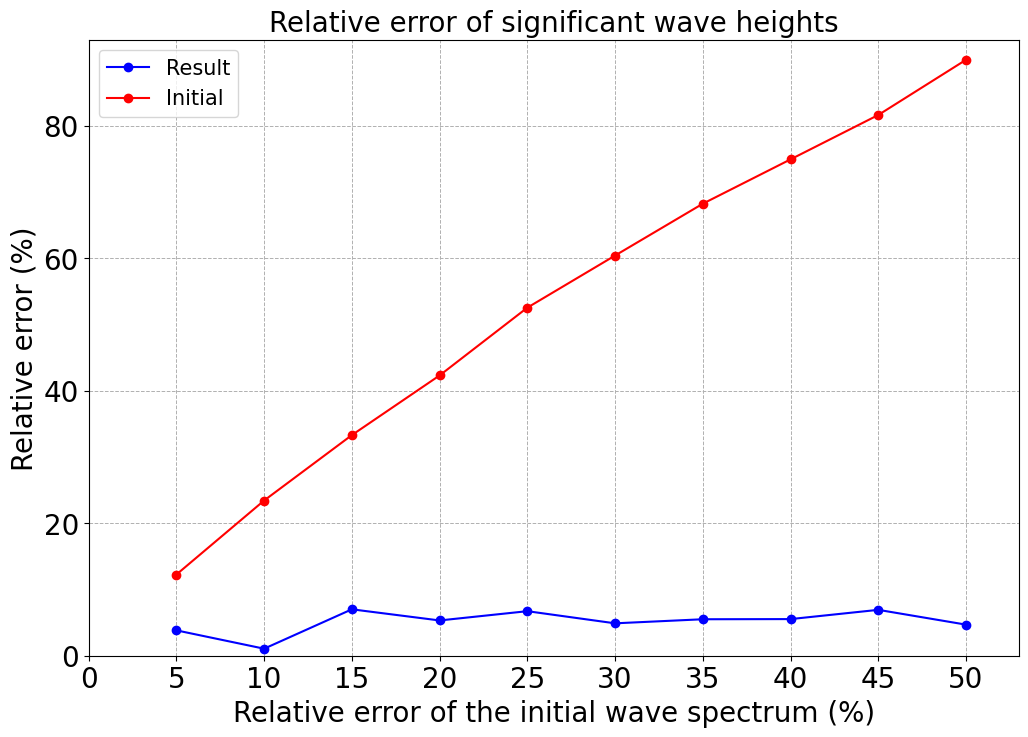} 
        
        \subcaption{Relative error of significant wave heights}
        \label{fig:second_image}
    \end{minipage}
    \caption{Results with perturbation intensity of 5\%--50\%.}
    \label{fig:two_images}
\end{figure}

Figure \ref{fig:two_images} presents the experimentally obtained results when the perturbation magnitude is changed from 5\% to 50\%. The left panel of Figure \ref{fig:two_images}($A$) shows the relative error of the reconstructed wave spectrum, whereas the right panel of Figure \ref{fig:two_images}($B$) depicts the transition of the relative error of the significant wave heights. In the graph showing the relative error of the wave spectrum, the horizontal axis represents the perturbation magnitude, and the vertical axis shows the resulting error rate. The light blue shaded area corresponds to the region where $y \leq x$, indicating that the error rate decreases. The blue graph lies within this region, showing that the maximum error rate is suppressed to approximately 13\%.

For the relative error of the significant wave heights (Figure \ref{fig:two_images}($B$)), the horizontal axis represents the perturbation magnitude, whereas the vertical axis indicates the error rate. The red graph shows the error rate estimated from the initial wave spectrum. The blue graph shows the error rate derived from the reconstructed wave spectrum. The significant wave heights estimated from the reconstructed wave spectrum are closer to the true values than those derived from the initial spectrum, with the error rate reduced to approximately 7\%. These results demonstrate that the wave spectrum and significant wave heights can be estimated with high accuracy.

For the next experiment, a model wave spectrum is generated. Noise levels of 0.5\%, 1\%, and 5\% are added to the second-order Doppler spectrum calculated from it. These noisy Doppler spectra are treated as the observed data. Deviation from the true wave spectrum and the error of the estimated significant wave heights are analyzed. We set the initial value of the algorithm to the true wave spectrum for these experiments.

We set the parameters as $ \lambda = 10^{-3} $ and $ \alpha = 10^{-6} $. The relative errors obtained from the results after 10 iterations are presented in Table \ref{tab:errors}. From these results, it can be inferred that the algorithm is stable. Furthermore, in terms of significant wave height estimation, the deviation remains minimal, thereby indicating the robustness of the algorithm.

\begin{table}[h]
\centering

\begin{tabular}{ccc}
\toprule
Noise (\%) & Wave spectrum error (\%) & Significant wave height error (\%) \\
\midrule
0.5 & $6.4 \times 10^{-5}$ & $1.9 \times 10^{-3}$ \\
1   & $3.4 \times 10^{-5}$ & $2.9 \times 10^{-4}$ \\
5   & $7.2 \times 10^{-6}$ & $4.3 \times 10^{-4}$ \\
\bottomrule
\end{tabular}
\caption{Relative errors for the wave spectrum and the significant wave height}
\label{tab:errors}
\end{table}

\subsection{Wave height estimation using real data}
Finally, we estimate the significant wave heights using the observed second-order Doppler spectra. Data were obtained for Murotsu Bay, located in Muroto city, Kochi prefecture, Japan. The observation period was 9:56 AM to 11:56 AM on May 11, 2023, with measurements taken at 20 min intervals.

The algorithm requires an initial guess, which is generated using significant wave heights and significant wave periods observed in Murotsu Bay on May 11, 2022, from 9:40 AM at 20 min intervals. These values were provided by the Nationwide Ocean Wave information network for Ports and HArbourS website (NOWPHAS) \cite{nowphas}. We assume the wave propagation direction to be north (180 degrees) for the model. We set the regularization parameter $\alpha$ to $1.0 \times 10^{-8}$. Additionally, we adjust $\lambda$ to match the true wave height value closely at 9:56. From numerical experiments, $\{\|\sigma_2 - A[S_\text{initial}]\|_2 / \|\sigma_2\|\} \times 10^{4}$ was found to be a candidate for $\lambda$. Therefore, we adopt it.
\begin{figure}[tb]
    \centering
    \includegraphics[width=0.8\linewidth]{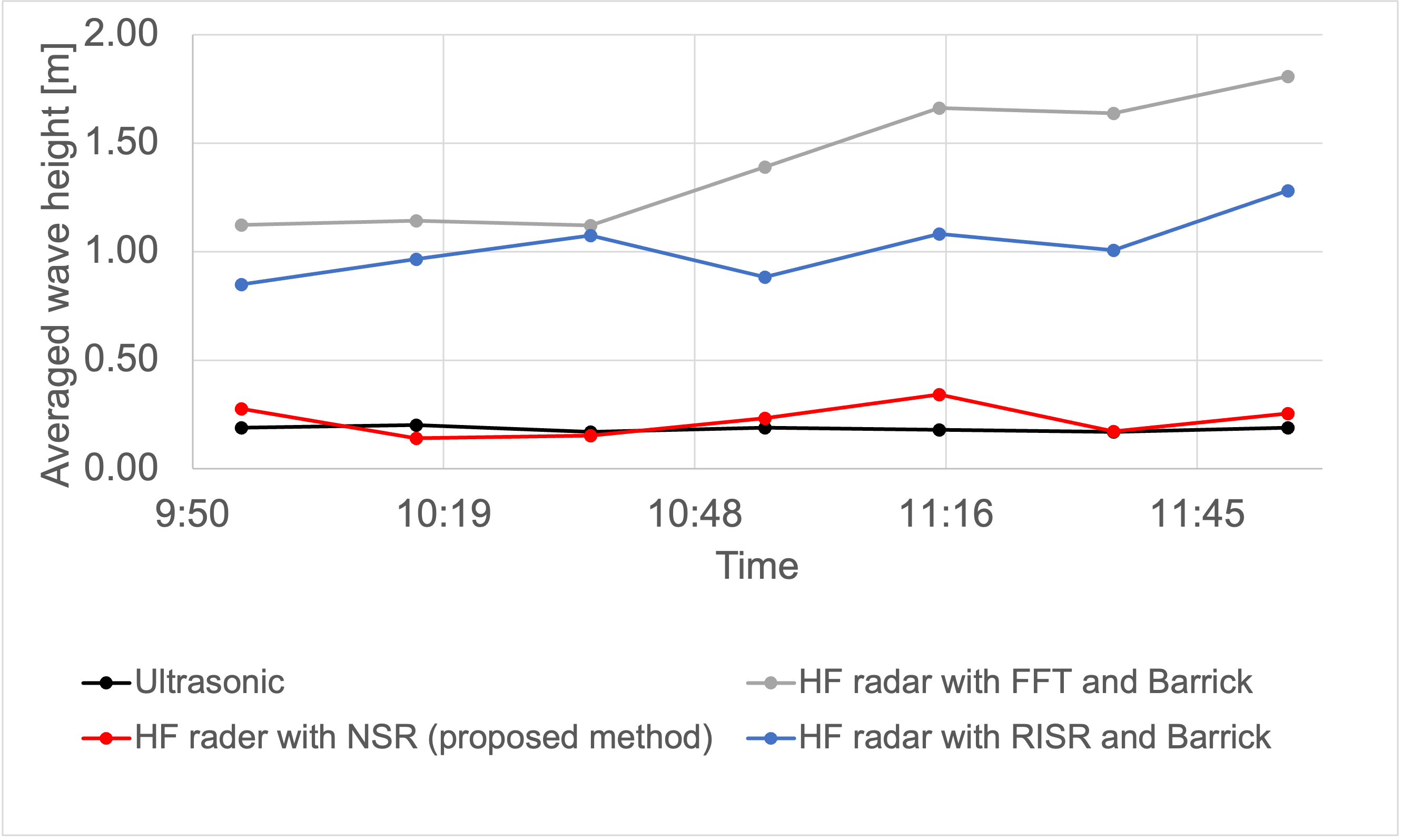} 
    \caption{Summary of estimated wave heights. The black line represents the true values, whereas the red line represents the wave heights estimated using the proposed method. The other lines correspond to estimations obtained using methods based on the Barrick method.}
    \label{result2}
\end{figure}

We compare the results obtained using the method proposed herein with the ultrasonic measurement \cite{Sener2023}, which we regard as true significant wave heights, and with two earlier estimation results obtained using the Barrick method with HF radar data combined with Doppler spectrum refinement techniques, such as fast Fourier transform (FFT) and re-iterative super-resolution (RISR) presented in \cite{kameda2023hf}. Figure \ref{result2} shows the ultrasonic measurement data as the black line, whereas the red line represents results obtained using the proposed method. The other two lines correspond to earlier estimations derived from the combined methods described above. The significant wave heights obtained using the ultrasonic measurement and using the methods combining the Barrick approach with FFT or RISR are based on estimated values reported by \cite{kameda2023hf}. From the comparison presented in Figure \ref{result2}, it can be inferred that the proposed method has potential to achieve wave height estimation with higher accuracy. However, this result should be interpreted with caution because the regularization parameter $\lambda$ was adjusted using the true wave height at 9:56. Additionally, the high accuracy of estimation might be attributed to the uniform wave conditions. Therefore, applying the method to Doppler data with various wave conditions is necessary to evaluate the effectiveness of this method.

\section{Conclusions}\label{Conclusions}
We showed the line integral expression of the relation between the second-order Doppler spectrum and the wave spectrum. Based on this form, we proved that the Doppler spectrum belongs to $L^2$ when the wave spectrum is a member of $H^1$. Using these facts, we developed a wave spectrum estimation algorithm based on the nonnegative sparse regularization technique, derived the explicit gradient form of the associated Tikhonov functional, and then discussed its stability. Numerical experiments demonstrated that this algorithm has potential to achieve high estimation accuracy of significant wave heights. Our study is the first to present the mathematically rigorous treatment of the inverse problem for estimating the wave spectrum.

In future work, after evaluating the effectiveness of the wave height estimation algorithm by applying it to real data under various wave conditions, we shall seek improved methods to determine the parameters $\lambda$ and $\alpha$.


\section*{Acknowledgements}

This study was supported by a collaborative research agreement between Tohoku University and the Information Technology R\&D Center of Mitsubishi Electric Corporation. This work was also supported by JST SPRING (JPMJSP2114, K.W.).

\bibliographystyle{plain}

\end{document}